\titleformat{\section}{\center\normalsize\bfseries}{\thesection. }{0ex}{}
\titleformat{\subsection}{\normalsize\bfseries}{\hspace{3em}\thesubsection. }{0pt}{}
\titleformat{\subsubsection}[runin]{\normalsize\bfseries}{\thesubsection}{0.5em}{}[.]
\setlist{topsep=0.5ex,itemsep=0ex,parsep=0ex,leftmargin=3em,listparindent=\parindent}
\newtheoremstyle{thmstyle}{}{}{\itshape}{}{\bfseries}{. }{0pt}{\thmnumber{#2.\ }\thmname{#1}\thmnote{\textnormal{ (#3)}}}
\newtheoremstyle{mainthmstyle}{}{}{\itshape}{}{\bfseries}{. }{0pt}{\thmname{#1 }\thmnumber{#2}\thmnote{\textnormal{ (#3)}}}
\newtheoremstyle{defnstyle}{}{}{}{}{\bfseries}{.}{ }{\thmnumber{#2.\ }\thmname{#1}\thmnote{\textnormal{ (#3)}}}
\newtheoremstyle{thingstyle}{}{}{}{}{\bfseries}{. }{0pt}{}
\theoremstyle{thmstyle}
\newtheorem{thm}{Theorem}
\newtheorem{lem}[thm]{Lemma}
\newtheorem{prop}[thm]{Proposition}
\newtheorem{cor}[thm]{Corollary}
\theoremstyle{mainthmstyle}
\newtheorem{mainthm}{Theorem}
\theoremstyle{defnstyle}
\newtheorem{defn}[thm]{Definition}
\newtheorem{question}[thm]{Question}
\newtheorem{rmk}[thm]{Remark}
\theoremstyle{thingstyle}
\newtheorem{thing}[thm]{}
\DeclareMathOperator{\Aut}{Aut}
\DeclareMathOperator{\charac}{char}
\DeclareMathOperator{\Frob}{Frob}
\DeclareMathOperator{\Gal}{Gal}
\DeclareMathOperator{\GL}{GL}
\DeclareMathOperator{\inn}{inn}
\DeclareMathOperator{\Img}{Img}
\DeclareMathOperator{\Ker}{Ker}
\DeclareMathOperator{\SL}{SL}
\DeclareMathOperator{\Out}{Out}
\DeclareMathOperator{\rank}{rank}
\DeclareMathOperator{\Spec}{Spec}
\DeclareMathOperator{\W}{W}
\newcommand{\mc}{\mathcal}
\newcommand{\mr}{\mathrm}
\newcommand{\op}{\operatorname}
\newcommand{\tit}{\textit}
\newcommand{\ol}{\overline}
\newcommand{\wt}{\widetilde}
\newcommand{\A}{\mathbf{A}}
\newcommand{\G}{\mathbf{G}}
\newcommand{\cO}{\mathcal{O}}
\newcommand{\Q}{\mathbf{Q}}
\newcommand{\R}{\mathbf{R}}
\newcommand{\Z}{\mathbf{Z}}
\newcommand{\cF}{\mathcal{F}}
\newcommand{\cL}{\mathcal{L}}
\newcommand{\cR}{\mathcal{R}}
\newcommand{\cS}{\mathcal{S}}
\newcommand{\ff}{\mathfrak{f}}
\newcommand{\fT}{\mathfrak{T}}
\newcommand{\Ks}{{K^\mathrm{s}}}
\newcommand{\ssimp}{\mathrm{ss}}
\newcommand{\Zar}{\mathrm{Zar}}
\newcommand{\Xup}{\mathrm{X}^*}
\newcommand{\ceq}{\coloneqq}
\newcommand{\cln}{\colon}
\newcommand{\into}{\hookrightarrow}
\newcommand{\onto}{\mathrel{\mathrlap{\rightarrow}\mkern-2.25mu\rightarrow}}
\newcommand{\iso}{\overset{\sim}{\to}}
\let\phi\varphi
\let\setminus\smallsetminus
\newcommand{\Bibkeyhack}[3]{}
\title{
\large{\textbf{Transport of Zariski density in compatible collections of \texorpdfstring{\boldmath$G$}{G}-representations}}
}
\author{
\normalsize{\textbf{Jake Huryn and Yifei Zhang}}
}
\date{}
\begin{document}
\maketitle

\begin{abstract}
Let $X$ be a connected normal scheme of finite type over $\mathbf{Z}$, let $G$ be a connected reductive group over $\mathbf{Q}$, and let $\{\rho_\ell\colon\pi_1(X[1/\ell])\to G(\mathbf{Q}_\ell)\}_\ell$ be a Frobenius-compatible collection of continuous homomorphisms indexed by the primes.
Assume $\mathrm{Img}(\rho_\ell)$ is Zariski-dense in $G_{\mathbf{Q}_\ell}$ for all $\ell$ in a nonempty finite set $\cR$.
We prove that, under certain hypotheses on $\cR$ (depending only on $G$), $\Img(\rho_\ell)$ is Zariski-dense in $G_{\Q_\ell}$ for all $\ell$ in a set of Dirichlet density $1$.
As an application, we combine this result with a version of Hilbert's irreducibility theorem and recent work of Klevdal--Patrikis to obtain new information about the ``canonical'' local systems attached to Shimura varieties not of Abelian type.
\end{abstract}

\section{Introduction}

See \textsection\ref{subsec:notation} for a summary of the notation used throughout.

\subsection{Motivic background}
\label{subsec:motivic-bg}

\begin{thing}
\label{thing:tate-stuff}
In this item, let us assume all the standard conjectures on pure motives.
Let $X$ be a connected normal scheme of finite type over $\Z$.
Consider a collection $\{\rho_\ell\cln\pi_1(X_{\Z[1/\ell]})\to\GL_n(\Q_\ell)\}_\ell$ of (continuous) representations of the \'etale fundamental group of $X$.
Suppose $\{\rho_\ell\}_\ell$ is of geometric origin, e.g.\ $\rho_\ell$ arises as the monodromy representation of the $\ell$-adic local system $R^if_*\Q_\ell$ for a fixed smooth proper family $f$ over $X$.
Tate's conjecture then has the following consequence:
the \tit{algebraic monodromy groups} $M_\ell\ceq\ol{\Img(\rho_\ell)}{}^\Zar$ are \tit{independent of $\ell$} in the sense that $M_\ell\cong G_{\Q_\ell}$ for some algebraic group $G$ over $\Q$ \cite[Proposition 7.3.2.1]{andre}.
This $G$ is the ``motivic Galois group'' of the family $f$,\footnote{
More precisely, of the pure numerical motive over the function field of $X$ defined by $f$.
}
and a theorem of Jannsen \cite{jannsen} predicts that $G$ is a (possibly non-connected) reductive group.

On the other hand, by the Riemann hypothesis for varieties over finite fields, $\{\rho_\ell\}_\ell$ is \tit{compatible} in the sense that for any closed point $s$ of $X$, the characteristic polynomial of $\rho_\ell(\Frob_s)$, an element of $\Q_\ell[t]$, lives in $\Q[t]$ and is independent of $\ell$ when $\ell\neq\charac(\kappa_s)$.

Assuming Tate's conjecture, the Tannakian formalism allows one to upgrade $\{\rho_\ell\}_\ell$ to a collection $\{\wt\rho_\ell\cln\pi_1(X_{\Z[1/\ell]})\to G(\Q_\ell)\}_\ell$ of \tit{$G$-representations} all having Zariski-dense image which is compatible in the sense that $\{\xi\circ\wt\rho_\ell\}_\ell$ is compatible for any $\Q$-representation $\xi\cln G\to\GL_r$.
If $G$ is connected, this is equivalent to the following apparently stronger form of compatibility:
For each closed point $s$ of $X$, there exists an element $g\in G(\ol\Q)$ whose conjugacy class is defined over $\Q$ (i.e.\ $\Gal_\Q$-stable) such that the semisimple part of $\rho_\ell(\Frob_s)$ is conjugate in $G(\ol{\Q_\ell})$ to $g$.\footnote{
We have to fix some embeddings $\ol\Q\into\ol{\Q_\ell}$, but nothing will depend on them.
Note that, in general, a conjugacy class defined over $\Q$ will not contain an element defined over $\Q$, i.e.\ we cannot take $g\in G(\Q)$.

The equivalence stated here is an immediate consequence of the facts that the ring of class functions on a connected reductive group
is generated by the characters of irreducible representations,
separates semisimple conjugacy classes, and
detects the field of definition of a semisimple conjugacy class.

An alternative statement of compatibility uses the variety $G/\!\!/G$ of semisimple conjugacy classes, defined as the GIT quotient $\Spec(\cO(G)^G)$ with respect to the conjugation action.
For an algebraically closed field $F$, the map $[-]\cln G\to G/\!\!/G$ induces a bijection between $(G/\!\!/G)(F)$ and the set of semisimple conjugacy classes in $G(F)$.
The compatibility of $\{\wt\rho_\ell\}_\ell$ means that the elements $[\wt\rho_\ell(\Frob_s)]\in(G/\!\!/G)(\Q_\ell)$, for $\ell\neq\charac(\kappa_s)$, arise from a common element of $(G/\!\!/G)(\Q)$.
}
\end{thing}

\noindent
In an attempt to verify some consequences of these conjectures about pure motives, this paper considers the following question:

\begin{question}
\label{question:1}
Given an ``abstract'' compatible collection $\{\rho_\ell\cln\Gamma\to G(\Q_\ell)\}_\ell$ (a notion formalized in Definition \ref{defn:compatible} below), to what extent are its algebraic monodromy groups $M_\ell\ceq\ol{\Img(\rho_\ell)}{}^\Zar$ independent of $\ell$?
\end{question}

\noindent
Work of Serre implies that the rank and component group of $M_\ell$ are independent of $\ell$ \cite[\textsection2]{serre-letter}, \cite[2.2.3]{serre-resume}.
By delicate group theoretic arguments, Larsen--Pink obtained various other partial results in \cite{larsen-pink}.
For example, they prove that the dimension and Weyl group of $M_\ell$ depend, for $\ell$ from a set of Dirichlet density $1$, only on the Frobenius conjugacy class of $\ell$ in a fixed number field.
When $\Gamma$ is the absolute Galois group of a number field, more is known, e.g.\ \cite[Chapter 3]{serre-abelian-l-adic}, \cite{hui}, as well as when $\Gamma$ is the \'etale fundamental group of a smooth variety over a finite field, e.g.\ \cite{chin}, \cite{fancy-drinfeld}.

We do not approach Question \ref{question:1} in complete generality, but rather focus on the following special case:

\begin{question}
\label{question:main}
If $M_{\ell_0}=G_{\Q_{\ell_0}}$ for one $\ell_0$, can we conclude that $M_\ell=G_{\Q_\ell}$ for all $\ell$ in a set of Dirichlet density $1$?
(From \cite[Counterexample 10.4]{larsen-pink} we see that, in this abstract setting, the answer is ``\tit{no}'' if we ask that $M_\ell=G_{\Q_\ell}$ for all but finitely many $\ell$.)
\end{question}

\subsection{First main result}

Our first result gives a partial positive answer to Question \ref{question:main}, the caveat being that in many situations, we must replace $\ell_0$ by a finite set of primes.

\begin{defn}
Let $G$ be a connected reductive group over $\Q$, and let $E|\Q$ be the minimal extension such that $\Gal_E$ acts trivially on the Dynkin diagram of $G$.
Let's say that a finite set $\cR$ of primes is \tit{$G$-good} if for each conjugacy class $C$ of $\Gal(E|\Q)$, there exists $\ell_C\in\cR$ such that
\begin{enumerate}
\item
$\ell_C$ is unramified in $E|\Q$.
\item
the Frobenius conjugacy class of $\ell_C$ in $\Gal(E|\Q)$ is $C$.
\item
$G_{\Q_{\ell_C}}$ is quasisplit.
(It is well known that $G_{\Q_\ell}$ is quasisplit for all but finitely many $\ell$.)
\end{enumerate}
\end{defn}

\begin{mainthm}[Theorem \ref{thm:main} below]
\label{thm:A}
Let $G$ be a connected reductive group over $\Q$, and let $\{\rho_\ell\cln\Gamma\to G(\Q_\ell)\}_{\ell\in\cL}$  be an abstract compatible collection of $G$-representations such that $\cL$ is a set of Dirichlet density $1$.
For each $\ell\in\cL$, let $M_\ell\ceq\ol{\Img(\rho_\ell)}{}^\Zar$.
If there exists a $G$-good set $\cR\subseteq\cL$ such that $M_\ell=G_{\Q_\ell}$ for each $\ell\in\cR$, then
$
\{
\ell\in\cL
\cln
M_\ell
=
G_{\Q_\ell}
\}
$
has Dirichlet density $1$.
\end{mainthm}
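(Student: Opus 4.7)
The plan is to combine Serre's $\ell$-independence of the component group with the finer $\ell$-independence results of Larsen--Pink, using the $G$-good hypothesis to pin down the root datum of $M_\ell$ for density $1$ $\ell$. First, I would reduce the theorem to showing $\dim M_\ell = \dim G_{\Q_\ell}$ for density $1$ $\ell$: since $M_{\ell_C} = G_{\Q_{\ell_C}}$ is connected for any $\ell_C\in\cR$, Serre's theorem ensures $M_\ell$ is connected for every $\ell\in\cL$, and then the inclusion $M_\ell\subseteq G_{\Q_\ell}$ of smooth connected closed subvarieties becomes an equality as soon as the dimensions match.

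Next, I would apply the main $\ell$-independence theorem of Larsen--Pink to produce a finite Galois extension $L|\Q$ (which after enlargement may be taken to contain $E$) and a density $1$ set $\cL_0\subseteq\cL$ such that, for $\ell\in\cL_0$, the root datum of $M_\ell$ together with the $\Gal_{\Q_\ell}$-action on its Dynkin diagram depends only on the Frobenius class of $\ell$ in $\Gal(L|\Q)$. Shrinking $\cL_0$ further, we may assume $G_{\Q_\ell}$ is quasisplit for every $\ell\in\cL_0$. Now fix $\ell\in\cL_0$, let $C$ be the image of its Frobenius in $\Gal(E|\Q)$, and let $\ell_C\in\cR$ be the witness provided by $G$-goodness. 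Compatibility, Larsen--Pink invariance, and the assumption $M_{\ell_C}=G_{\Q_{\ell_C}}$ should identify the root datum of $M_\ell$ with that of $G_{\Q_{\ell_C}}$. Since the Galois action on the Dynkin diagram of $G$ factors through $\Gal(E|\Q)$ by definition of $E$, the two quasisplit groups $G_{\Q_\ell}$ and $G_{\Q_{\ell_C}}$ carry the same Dynkin datum with Galois action and are therefore abstractly isomorphic as reductive groups over $\Q_\ell$, $\Q_{\ell_C}$ respectively. In particular $\dim M_\ell = \dim G_{\Q_{\ell_C}} = \dim G_{\Q_\ell}$, closing the loop.

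The principal obstacle I anticipate is the descent from the abstract field $L$ produced by Larsen--Pink to the intrinsic field $E$: their theorem controls $\ell$-independence in terms of Frobenius classes in an auxiliary $L$ manufactured from the compatible system, and one must verify that the relevant root-datum invariants actually descend to invariants of the Frobenius class in $\Gal(E|\Q)$, so that the finitely many witnesses in $\cR$ suffice to cover all necessary types. The quasisplit clause in the definition of $G$-good is the tool that makes this descent possible, by reducing the classification of the relevant $\Q_\ell$-forms of $G$ to the Galois action on the Dynkin diagram; coordinating it precisely with the Larsen--Pink output is where most of the work should concentrate.
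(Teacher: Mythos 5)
Your proposal takes a genuinely different route from the paper, but it has a gap at exactly the point you flag as the ``principal obstacle,'' and the mechanism you suggest for closing it does not in fact close it.

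The Larsen--Pink $\ell$-independence theorem provides a number field $L$, constructed from the compatible system itself, such that for density $1$ many $\ell$ the Weyl group (or Dynkin datum) of $M_\ell$ depends only on the Frobenius class of $\ell$ in $\Gal(L|\Q)$. There is no control over how much larger $L$ is than $E$. Consequently, two primes $\ell$ and $\ell_C$ with the same Frobenius class in $\Gal(E|\Q)$ may lie in different Frobenius classes of $\Gal(L|\Q)$, and then Larsen--Pink gives you no comparison between $M_\ell$ and $M_{\ell_C}$. Your finitely many witnesses in $\cR$ only pin down the Dynkin datum on their own classes in $\Gal(L|\Q)$, not on the larger classes in $\Gal(E|\Q)$. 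The quasisplit hypothesis on $G_{\Q_{\ell_C}}$ does not help with this: in the paper it is used through Steinberg's theorem (via \cite[Lemma 3.6]{larsen-pink}) to ensure that $M_{\ell_C}=G_{\Q_{\ell_C}}$ contains a maximal torus of every possible type, which is a statement about torus types inside $G_{\Q_{\ell_C}}$, not a statement that lets you relate different conjugacy classes of $\Gal(L|\Q)$ lying over the same class of $\Gal(E|\Q)$.

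The paper does not attempt this descent. Instead, after reducing to $G$ adjoint with $M_\ell$ reductive of maximal rank, it constructs, for each $n$, an auxiliary field $F_n$ (built from torus data at $n$ fresh primes via Lemma \ref{lem:torus-info}), shows the $F_i$ are linearly disjoint over $E$, and gets via Lemma \ref{lem:super-jordan} that $M_\ell=G_{\Q_\ell}$ as soon as $\Frob_\ell$ agrees with $\Frob_{\ell_C}$ in some $\Gal(F_i|\Q)$. The density bound \eqref{eq:density-bound} then forces the exceptional set to have density $0$ as $n\to\infty$. The crucial advantage of this architecture is that it never needs a single fixed $L$ controlling all primes: the auxiliary fields proliferate with $n$, and linear disjointness makes the exceptional set shrink. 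If you want to salvage your approach, you would need, in effect, to re-prove this linear-disjointness-and-shrinking argument rather than invoke Larsen--Pink's main theorem as a black box; as written, the proposal does not supply this.

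One smaller issue: your first reduction (connectedness via Serre plus dimension count) is fine as far as it goes, but you also need $M_\ell$ to be reductive for the Larsen--Pink results you want to cite; the paper arranges this by passing to $G$-semisimplifications, and you should too.
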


\begin{rmk}
\label{rmk:main}
\mbox{}
\begin{enumerate}
\item
In particular, when the $\Gal_\Q$-action on the Dynkin diagram is trivial, the conclusion of Theorem \ref{thm:A} becomes nicer: If $M_\ell=G_{\Q_\ell}$ for a \tit{single} prime $\ell_0$ for which $G_{\Q_{\ell_0}}$ is quasisplit, then
$
\{
\ell\in\cL
\cln
M_\ell
=
G_{\Q_\ell}
\}
$
has Dirichlet density $1$.
\item
Since Theorem \ref{thm:A} applies to any reductive $G$ over $\Q$, we can use a restriction-of-scalars argument to deduce a version with ``coefficients in any number field''.
Specifically, let $\{\rho_\lambda\cln\Gamma\to G(K_\lambda)\}_{\lambda\in\cL}$ be an abstract compatible collection, where $K$ is a number field, $G$ is a connected reductive group over $K$, and $\cL$ is a set of primes of $K$.
Then we obtain a collection $\{\rho_\ell\cln\Gamma\to G_0(\Q_\ell)\}_{\ell\in\cL_0}$, where $G_0\ceq\op{Res}_{K|\Q}(G)$ (Weil's restriction of scalars) and $\cL_0$ is the set of rational primes all of whose prime divisors in $K$ live in $\cL$.
It is easily checked that $\{\rho_\ell\}_{\ell\in\cL_0}$ is again compatible.
So Theorem \ref{thm:A} may be invoked:
If $\cL_0$ has Dirichlet density $1$, and there exists a $G_0$-good set $\cR$ such that $M_\lambda=G_{K_\lambda}$ for all $\lambda$ dividing an element of $\cR$, then there exists a set $\cL_0'\subseteq\cL_0$ of Dirichlet density $1$ such that $M_\lambda=G_{K_\lambda}$ for all $\lambda$ dividing an element of $\cL_0'$.
\item
When $G$ is semisimple, Serre has shown, using an argument with Lie algebras, that a Zariski-dense subgroup of $G(\Q_\ell)$ must be \tit{open} for the $\ell$-adic topology \cite[Corollary to Proposition 2]{serre-p-divisible-groups}.
In the setting of Theorem \ref{thm:A}, we can use the compatibility of $\{\rho_\ell\}_\ell$ to invoke \cite[Theorem 3.17]{larsen-maximality} and obtain more: for all $\ell$ in a set of Dirichlet density $1$, $\Img(\rho_\ell)$ is not just open in $G(\Q_\ell)$ but ``close'' to being a maximal compact subgroup in the following sense.
Consider the natural maps
\[
G\xrightarrow\sigma
G/\mr R(G)\xleftarrow\tau
G^\mr{sc},
\]
where $\mr R(G)$ is the radical of $G$ and $G^\mr{sc}$ is the simply connected cover of $G/\mr R(G)$;
then $\tau^{-1}(\sigma(\Img(\rho_\ell)))$ is a hyperspecial subgroup of $G^\mr{sc}(\Q_\ell)$, meaning that $G^\mr{sc}_{\Q_\ell}$ spreads out to a reductive group over $\Z_\ell$, and $\tau^{-1}(\sigma(\Img(\rho_\ell)))=G^\mr{sc}(\Z_\ell)$ for some such spreading-out.
\item
Let $\{\rho_\ell\cln\Gamma\to G(\Q_\ell)\}_{\ell\in\cL}$ be any abstract compatible collection of $G$-representations, where $G$ is semisimple, and assume that $M_\ell$ is of maximal rank in $G_{\Q_\ell}$ for one $\ell\in\cL$.
Also, fix a faithful $\Q$-representation $\xi\cln G\to\GL_r$.
By the result of Serre mentioned above \cite[\textsection3]{serre-letter}, $M_\ell$ is then of maximal rank in $G_{\Q_\ell}$ for all $\ell\in\cL$.
It follows from \cite[Theorem 7.1]{dynkin} that if $\xi$ is absolutely irreducible, then $\xi\circ\rho_\ell$ is absolutely irreducible precisely when $M_\ell=G_{\Q_\ell}$.
Thus Theorem \ref{thm:A} can be interpreted as a ``transport of irreducibility'' result for the very special class of compatible collections of $\GL_r$-representations of the form $\{\xi\circ\rho_\ell\}_\ell$.
\item
On the proof:
We rely heavily on the ideas developed in \cite{larsen-pink} and can think of no better introduction to them than the first three pages of \tit{loc.\ cit.}, which in particular walks the reader through the proof of Theorem \ref{thm:A} for $G=\SL_2$.
\end{enumerate}
\end{rmk}

\subsection{Second main result and application to Shimura varieties}

Our second result is a straightforward application of Theorem \ref{thm:A}.
In the statement, we write $\rho_{\ell,x}$ for the restriction of $\rho_\ell$ to $\pi_1(\{x\})$ (see (\tit d) of \textsection\ref{subsec:notation} below) and $M_{\ell,x}\ceq\ol{\Img(\rho_{\ell,x})}{}^\Zar$.

\begin{mainthm}[Theorem \ref{thm:main-2} below]
\label{thm:B}
Let $X$ be a connected normal scheme of finite type over $\Z$ such that $\dim(X_\Q)\geq1$, and let $\{\rho_\ell\cln\pi_1(X_{\Z[1/\ell]})\to G(\Q_\ell)\}_{\ell\in\cL}$ be a compatible collection of $G$-representations, where $G$ is a connected reductive group over $\Q$ and $\cL$ is a set of Dirichlet density $1$.
Assume that $M_\ell=G_{\Q_\ell}$ for each $\ell\in\cL$.

Suppose $x$ is a closed point of $X_\Q$ with the following property:
There exists a positive integer $N$ such that
$x$ extends to an element of $X(\ol\Z[1/N])$ and a $G$-good set $\cR\subseteq\cL$ of primes not dividing $N$ such that $M_{\ell,x}=G_{\Q_\ell}$ for each $\ell\in\cR$.
Then $\{\ell\in\cL\cln M_{\ell,x}=G_{\Q_\ell}\}$ has Dirichlet density $1$.

Moreover, there exists a positive integer $d$ and infinitely many $x$ satisfying the hypotheses of the previous paragraph and also satisfying $[\kappa_x:\Q]\leq d$.
\end{mainthm}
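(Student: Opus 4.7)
The first assertion is a direct application of Theorem~\ref{thm:A}. For $x \in X(\ol\Z[1/N])$ and each $\ell \in \cL$ with $\ell \nmid N$, restriction of $\rho_\ell$ along the induced homomorphism $\pi_1(\Spec \kappa_x) \to \pi_1(X_{\Z[1/\ell]})$ yields $\rho_{\ell,x}$. The collection $\{\rho_{\ell,x}\}_{\ell \in \cL,\, \ell \nmid N}$ is compatible because any Frobenius in $\pi_1(\Spec \kappa_x)$ maps to the Frobenius of the corresponding closed point of $X$, so characteristic polynomials are inherited. The index set still has Dirichlet density $1$ and contains the $G$-good subset $\cR$, so Theorem~\ref{thm:A} applies directly to yield the claimed density-$1$ conclusion for $\{\ell \in \cL \cln M_{\ell,x} = G_{\Q_\ell}\}$.

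For the ``moreover'' assertion, first fix a $G$-good set $\cR \subseteq \cL$ once and for all: since $\cL$ has density $1$, Chebotarev supplies, for each conjugacy class of $\Gal(E|\Q)$, a positive-density set of primes in $\cL$ that are unramified in $E|\Q$ and at which $G$ is quasisplit (the forbidden primes at each step form a finite set). The task then reduces to exhibiting infinitely many closed points $x$ of $X_\Q$ of uniformly bounded degree over $\Q$ such that $x$ has good reduction at every prime of $\cR$ and $M_{\ell,x} = G_{\Q_\ell}$ for every $\ell \in \cR$: for any such $x$, taking $N$ to be the finite product of its primes of bad reduction (which is coprime to $\cR$) yields the hypothesis of the first assertion, whence the conclusion.

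This existence is the crux, and amounts to a Hilbert-irreducibility statement for algebraic $\ell$-adic monodromy. For each $\ell \in \cR$, one claims that the closed points $x$ of $X_\Q$ with $M_{\ell,x} \subsetneq G_{\Q_\ell}$ form a \emph{thin} set $T_\ell$ in the sense of Serre. Granting this, the finite union $T = \bigcup_{\ell \in \cR} T_\ell$ remains thin; via Noether normalization one picks a finite dominant morphism $X_\Q \to \A^{\dim X_\Q}_\Q$ of some degree $d$, and classical Hilbert irreducibility supplies infinitely many $\Q$-points of $\A^{\dim X_\Q}_\Q$ whose preimages in $X_\Q$ avoid $T$, producing the desired points of degree at most $d$. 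The main obstacle is the thinness of $T_\ell$: since $G_{\Q_\ell}$ has infinitely many conjugacy classes of proper closed subgroups, one cannot naively enumerate the relevant covers of $X_\Q$. The expected route is to fix a $\mathcal{G}(\Z_\ell)$-valued integral model of $\rho_\ell$ for a reductive $\Z_\ell$-model $\mathcal{G}$ of $G_{\Q_\ell}$ (existing for all but finitely many $\ell$), pass to the mod-$\ell$ reduction which factors through the finite group $\mathcal{G}(\F_\ell)$, apply classical Hilbert irreducibility to this finite-image representation, and translate ``large mod-$\ell$ image'' back to ``Zariski-dense $\ell$-adic image'' via a Nori/Larsen--Pink-type argument.
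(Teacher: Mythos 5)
Your proof of the first assertion is correct and essentially matches the paper: restriction to $\pi_1(\Spec\kappa_x)$ preserves compatibility (the paper phrases this by noting the restricted collection is compatible relative to the F-group structure on $\Gal_{\kappa_x}$ coming from Frobenii of $\Spec(\cO_{\kappa_x}[1/N])$), and Theorem~\ref{thm:A} applies directly.

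For the ``moreover'' assertion, however, your proposed route has a genuine gap exactly where you flag the difficulty. You aim to show that $T_\ell \ceq \{x : M_{\ell,x} \subsetneq G_{\Q_\ell}\}$ is thin, and you correctly observe that one cannot naively enumerate proper closed subgroups of $G_{\Q_\ell}$. The mod-$\ell$ reduction plus Nori/Larsen--Pink workaround you sketch is not carried out, and it is not at all clear it can be made to work uniformly; in particular, ``large mod-$\ell$ image $\Rightarrow$ Zariski-dense $\ell$-adic image'' requires delicate hypotheses and is stronger than what the theorem needs. The paper sidesteps Zariski density entirely at this step: it never tries to show the exceptional set is thin in your sense. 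Instead, since $M_\ell = G_{\Q_\ell}$ for $\ell \in \cR$ is already given, it suffices to find $x$ with $\Img(\rho_{\cR,x}) = \Img(\rho_\cR)$ where $\rho_\cR \cln \pi_1(X_\Q) \to \prod_{\ell\in\cR} G(\Q_\ell)$ is the product map --- equality of \emph{profinite} images forces $M_{\ell,x} = M_\ell = G_{\Q_\ell}$ by projecting to each factor. Since $\Img(\rho_\cR)$ is a compact subgroup of a finite product of $\GL_{d_i}(\Q_{\ell_i})$'s, it has open Frattini subgroup (Lemma~\ref{lem:open-frattini}), so Serre's Hilbert irreducibility for profinite groups (Proposition~\ref{prop:exist-specializations}) produces infinitely many $x$ of bounded degree, extending to $X(\ol\Z[1/N])$, with $\Img(\rho_{\cR,x}) = \Img(\rho_\cR)$. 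This is a cover-by-finite-quotient argument, reducing to classical Hilbert irreducibility via the finite group $\Pi/\Phi(\Pi)$, and avoids entirely the issue of enumerating proper reductive subgroups. The missing idea in your proposal is this shift from ``Zariski-dense image'' to ``same image as the generic fiber,'' which is what makes the Frattini/open-subgroup machinery applicable.
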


\begin{rmk}
\mbox{}
\begin{enumerate}
\item
The bound on the residue degree is explicit:
If $X_\Q$ admits a finite morphism to $\A^n_\Q$ of degree $d$, then this number works in the final sentence of the statement of Theorem \ref{thm:B}.
In particular, if $X_\Q$ admits the structure of a rational $k$-variety, where $k$ is a number field, then we may take $\kappa_x=k$.
Moreover, if $X_\Q\to\A^n_\Q$ spreads out to a finite map $X_{\Z[1/N]}\to\A^n_{\Z[1/N]}$, then we can ask that $x$ extend to an element of $X(\ol\Z[1/N])$.
\item
On the proof:
The first part of Theorem \ref{thm:B} is an immediate consequence of Theorem \ref{thm:A}.
The second part, the abundance of specializations with ``big monodromy'', follows from a variant of Serre's version of Hilbert's irreducibility theorem for profinite groups \cite[\textsection10.6]{serre-mordell-weil}.
\end{enumerate}
\end{rmk}

\begin{cor}
\label{cor:shimura}
Let $(G,X)$ be a Shimura datum such that $\rank_\R(G^\mr{ad})\geq2$, let $S$ be a geometrically connected component \textnormal(defined over a number field\textnormal) of a Shimura variety attached to $(G,X)$, and let $\{\rho_\ell\cln\pi_1(S)\to G^\mr{ad}(\Q_\ell)\}_\ell$ be the adjoint projections of the ``canonical'' $\ell$-adic local systems on $S$, defined as in \textnormal{\cite[\textsection4]{cadoret-kret}} or \textnormal{\cite[\textsection\textsection3.1--3.2]{klevdal-patrikis}}.
Then there exists a positive integer $d$ and infinitely many closed points $x$ of $S$ satisfying $[\kappa_x:\Q]\leq d$ such that $\{\ell\cln M_{\ell,x}=G_{\Q_\ell}\}$ has Dirichlet density $1$.
\end{cor}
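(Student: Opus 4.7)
The plan is to deduce the corollary from Theorem~\ref{thm:B} applied to the reductive group $G^\mr{ad}$ over $\Q$ and a suitable spreading-out of $S$. The three hypotheses of Theorem~\ref{thm:B} to verify are the geometric setup on $X$, compatibility of the collection, and density-one Zariski-density of the global monodromy.

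First, since $S$ is defined over a number field $E$, I would choose a spreading-out $\cS$ of $S$ over $\cO_E[1/N]$ for a suitable $N$ and view it as a connected normal scheme $X$ of finite type over $\Z$ (connectedness follows from geometric connectedness of $S/E$, and normality from passing to the normalization if needed). One has $\dim(X_\Q)=\dim_E(S)\geq 2$ because $\rank_\R(G^\mr{ad})\geq 2$. After possibly enlarging $N$, the $\rho_\ell$ for $\ell\nmid N$ extend to continuous representations of $\pi_1(X_{\Z[1/\ell]})$, and the resulting collection is compatible in the sense of Definition~\ref{defn:compatible}; these facts are essentially the content of \cite[\textsection\textsection3.1--3.2]{klevdal-patrikis}.

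Second, the main work of Klevdal--Patrikis establishes Zariski-density of the global adjoint monodromy: their method invokes Margulis's super-rigidity theorem, whose hypothesis $\rank_\R(G^\mr{ad})\geq 2$ is precisely the one we have assumed, to force $\Img(\rho_\ell)$ to be Zariski-dense in $G^\mr{ad}_{\Q_\ell}$ at sufficiently many primes. If this already yields a Dirichlet density-one set, one proceeds directly; otherwise, assuming Zariski-density on a $G^\mr{ad}$-good set $\cR$ (which we construct by using Chebotarev density to exhaust the Frobenius conjugacy classes in the Galois group of the splitting field of the Dynkin diagram of $G^\mr{ad}$ and discarding the finitely many primes where $G^\mr{ad}$ fails to be quasisplit), Theorem~\ref{thm:A} upgrades this to a density-one set $\cL$ of primes satisfying $M_\ell=G^\mr{ad}_{\Q_\ell}$.

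Finally, Theorem~\ref{thm:B} applied to $\{\rho_\ell\}_{\ell\in\cL}$ on $X$, with $G$ replaced by $G^\mr{ad}$, produces a positive integer $d$ and infinitely many closed points $x\in X_\Q$ with $[\kappa_x:\Q]\leq d$ such that $\{\ell:M_{\ell,x}=G^\mr{ad}_{\Q_\ell}\}$ has Dirichlet density one; since closed points of $X_\Q$ correspond to closed points of $S$, this finishes the proof. The substantive obstacle is really packaged into the Klevdal--Patrikis input and Theorem~\ref{thm:B} itself. Here one just needs to arrange a careful enough spreading-out and verify the density-one Zariski-density hypothesis, which is where Theorem~\ref{thm:A} earns its keep if the Klevdal--Patrikis method does not directly reach primes in a $G^\mr{ad}$-good set.
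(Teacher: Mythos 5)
Your overall route---spread out $S$ to an integral model, use Klevdal--Patrikis for the extension and compatibility of the $\rho_\ell$, and then invoke Theorem~\ref{thm:B}---is exactly what the paper does (the paper's proof is two sentences and does precisely this). However, your second paragraph misattributes the role of the hypothesis $\rank_\R(G^\mr{ad})\geq 2$. It is used by Klevdal--Patrikis (via Margulis superrigidity) to establish the \emph{compatibility} of the $\rho_\ell$ and their extension to the integral model, not to establish Zariski-density. The Zariski-density of $\Img(\rho_\ell)$ holds for every $\ell$ unconditionally: $\Img(\rho_\ell)$ contains the image of the geometric fundamental group of $S_{\ol\Q}$, which is essentially the $\ell$-adic closure of (a finite-index subgroup of) an arithmetic lattice in $G^\mr{ad}(\Q)$, and such lattices are Zariski-dense by the Borel density theorem. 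So the hedging about possibly needing a preliminary pass through Theorem~\ref{thm:A} is unnecessary: the hypothesis of Theorem~\ref{thm:B} that $M_\ell = G^\mr{ad}_{\Q_\ell}$ for every $\ell\in\cL$ already holds with $\cL$ the set of all primes at which the local system is defined. (Theorem~\ref{thm:A} does enter the argument, but only internally, inside the proof of Theorem~\ref{thm:B}.) None of this derails your proof, but it conflates the two distinct inputs to Theorem~\ref{thm:B}: compatibility, which is conditional on the rank hypothesis, and global Zariski-density, which is not.
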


\begin{proof}
By recent work of Klevdal--Patrikis \cite[Theorem 1.3]{klevdal-patrikis}, the $\rho_\ell$ extend to an integral model of $S$, and the extensions are compatible in the sense described above, allowing us to invoke Theorem \ref{thm:B}.
\end{proof}

\begin{thing}
Due to versions of Tate's conjecture for Abelian varieties proven by Faltings, \tit{much} stronger results are known for $(G,X)$ of Abelian type; see e.g.\ the work of Cadoret--Kret \cite[Theorem A]{cadoret-kret}.
Thus Corollary \ref{cor:shimura} is novel only when $(G,X)$ is not of Abelian type, in which case the $\rho_\ell$ are not known, but conjectured, to be of geometric origin.
As a concrete example, we obtain from these ``non-Abelian Shimura varieties'' compatible collections of representations $\{\Gal_F\to G(\Q_\ell)\}$, where $F$ is a number field and $G$ is an adjoint group of type $\mr E_{6(-14)}$ or $\mr E_{7(-25)}$, with Zariski-dense image along a set of primes of Dirichlet density $1$.
\end{thing}

\subsection{Notation}
\label{subsec:notation}

Generalities:
\begin{enumerate}
\item
If $K$ is a field, $K^\mr{s}$ denotes a separable closure of $K$, and $\Gal_K$ always means the absolute Galois group of $K$, i.e.\ $\Gal(K^\mr{s}|K)$.
\item
Given a scheme $X$ over a ring $A$ and a ring morphism $A\to B$, we write $X_B\ceq X\times_{\Spec(A)}\Spec(B)$.
\end{enumerate}
Algebraic groups:
\begin{enumerate}[resume]
\item
If $\Gamma$ is a profinite group and $G$ is an algebraic group over $\Q_\ell$, then by a ``$G$-representation'' we mean a homomorphism $\Gamma\to G(\Q_\ell)$ which is continuous for the profinite topology on $\Gamma$ and the $\ell$-adic topology on $G(\Q_\ell)$.
(When $G=\GL_n$, we may omit ``$G$'' from the terminology.)
\item
Given an algebraic group $G$ over $\Q_\ell$ and $S\subseteq G(\Q_\ell)$, we write $\ol S{}^\Zar$ for the Zariski closure of $S$ (a closed subscheme of $G$ which itself is an algebraic group).
\item
Given a reductive group $G$, we write $G^\mr{ad}\ceq G/\mr Z(G)$ for the adjoint group attached to $G$ and $G^\mr{Ab}\ceq G/[G,G]$ the Abelianization.
Given an element $g\in G(K)$, we let $g_\ssimp$ denote the semisimple part of $g$ in its Jordan decomposition.
\end{enumerate}
Fundamental groups of schemes:
\begin{enumerate}[resume]
\item
For $X$ a connected scheme, we denote by $\pi_1(X)$ the \'etale fundamental group of $X$, leaving the basepoint implicit.
Given a point $x\in X$, we get a homomorphism $\psi_x\cln\Gal_{\kappa_x}\to\pi_1(X)$ by identifying $\Gal_{\kappa_x}$ with the \'etale fundamental group of the scheme $\{x\}$; this $\psi_x$ is, of course, well defined only up to conjugation by $\Gal_{\kappa_x}$.
\item
Given a group homomorphism $\rho\cln\pi_1(X)\to\Pi$ and a point $x\in X$, let $\kappa_x$ denote the residue field of $x$ and $\rho_x\cln{\Gal_{\kappa_x}}\to\Pi$ the composition $\rho\circ\psi_x$.
\item
When $s$ is a point of $X$ with finite residue field, we let $\Frob_s$ be the conjugacy class of $\psi_s(\Frob_{\kappa_s})$ in $\pi_1(X)$, where $\Frob_{\kappa_s}$ is the Frobenius automorphism of $\kappa_s$; the elements of $\Frob_s$ are the ``Frobenius elements of $s$ in $\pi_1(X)$''.
\end{enumerate}

\subsection{Outline of the paper}

In \textsection\ref{sec:bg}, we provide the necessary background on reductive groups and their maximal tori.
In \textsection\ref{subsec:compatible}, we define and study ``abstract'' compatible collections of $G$-representations.
This allows us to state and prove Theorem \ref{thm:A} in \textsection\ref{subsec:thm-A}.
Finally, we give a version of Hilbert's irreducibility theorem in \textsection\ref{subsec:hilbert-irreducibility} and discuss Theorem \ref{thm:B} in \textsection\ref{subsec:thm-B}.

\subsection{Acknowledgements}

We are very happy to thank our PhD advisor Stefan Patrikis for his persistent generosity and encouragement and for sharing with us many useful references.
Among these, we are particularly indebted to Michael Larsen and Richard Pink for the results and ideas contained in their marvelous paper \cite{larsen-pink}.
We are also grateful Christian Klevdal and Stefan Patrikis for their paper \cite{klevdal-patrikis}, which gave birth to this one.
Finally, we thank Ishan Banerjee for his interest in the project and helpful comments on the manuscript.

This material is based upon work supported by the National Science Foundation under Grant No.\ DMS-2231565.

\section{Group-theoretic background}
\label{sec:bg}

\subsection{Classification of quasisplit reductive groups and their maximal tori}
\label{subsec:groups-and-tori}

\noindent
In this \textsection\ref{subsec:groups-and-tori} we essentially summarize \cite[3.1--3.6]{larsen-pink}.
Let $K$ be a field with a fixed separable closure $\Ks$. 

\begin{thing}
\label{thing:G-setup}
Fix a connected reductive\footnote{In fact, with the exception of Lemma \ref{lem:super-jordan}, the final arguments will reduce immediately to the case when $G$ is semisimple, so the reader loses little in assuming this to be so for all of \textsection\ref{sec:bg} and replacing all root data with root systems.}
group $G$ over $K$ and a maximal torus $T_0$ of $G_{\Ks}$ (which need not be defined over $K$, though we will later assume it to be).
Let $\Psi_0$ be the root datum attached to $T_0$.
Denote by $\W(\Psi_0)$ and $\Aut(\Psi_0)$, respectively, the Weyl and automorphism groups of $\Psi_0$ in the sense of root data, and by $\Out(\Psi_0)$ the quotient $\Aut(\Psi_0)/{\W(\Psi_0)}$.
Finally, for any subset $\Omega\subseteq\Aut(\Psi_0)$ stable under conjugation by $\W(\Psi_0)$, we will denote by $[\Omega]$ the set of orbits of the conjugaction action of $\W(\Psi_0)$ on $\Omega$.
\end{thing}

\begin{thing}
\label{thing:tori-type}
Let $T$ be any maximal torus of $G$.
The Galois action on $T_\Ks$ induces a homomorphism $\phi_T\cln{\Gal_K}\to\GL(\Xup(T_{K^{\mr s}}))$.
If we fix an element $a\in G(\Ks)$ satisfying $aT_\Ks a^{-1}=T_0$, it induces an isomorphism $\theta_a\cln{\GL(\Xup(T_{K^{\mr s}}))}\iso\GL(\Xup(T_0))$, and the image of $\theta_a\circ\phi_T$ lands in $\Aut(\Psi_0)$.
The isomorphism $\theta_a$ depends on $a$ only up to $\W(\Psi_0)$-conjugation, so the $\W(\Psi_0)$-conjugacy class $[\phi_T]$ of $\theta_a\circ\phi_T$ depends only on $T$ (and the fixed objects $G$ and $T_0$).
Below, we will conflate $[\phi_T]$ with the function $\Gal_K\to[\Aut(\Psi_0)]$ valued in the set of $\W(\Psi_0)$-conjugacy classes of $\Aut(\Psi_0)$.

The composition
\[
\Gal_K
\xrightarrow{\theta_a\circ\phi_T}
\Aut(\Psi_0)
\onto
\Out(\Psi_0)
\]
is therefore also well defined, and one checks that it is \tit{independent of $T$}.
Denote it by $\phi_G$.
(This keeps track of action on $G^\mr{Ab}$ and the ``$*$-action'' on the Dynkin diagram induced by $\Gal_K$.)
Thus each maximal torus of $G$ corresponds to a \tit{$\W(\Psi_0)$-conjugacy class of lifts} of $\phi_G$.
\end{thing}

\begin{thing}
\label{thing:conjugate-over-K}
If $T$ and $T'$ are maximal tori of $G$ which are conjugate \tit{by an element of $G(K)$}, then $[\phi_T]=[\phi_{T'}]$.
Indeed, we have $\theta_b\circ\phi_{T'}=\phi_T$ for any $b\in G(K)$ satisfying $bT'b^{-1}=T$.
\end{thing}

\begin{thing}
\label{thing:exists-reductive}
Given an abstract root datum $\Psi$ and a homomorphism $\Gal_K\to\Out(\Psi)$, there exists a \tit{quasisplit} connected reductive group over $K$, unique up to isomorphism, realizing this data in the sense of (\ref{thing:G-setup}--\ref{thing:tori-type}).
\end{thing}

\noindent
The following key result rests ultimately on Steinberg's theorem that every rational conjugacy class in a quasisplit simply connected reductive group contains a rational element.

\begin{lem}[{\cite[Lemma 3.6]{larsen-pink}}]
\label{lem:exists-torus}
Let $G$ be a quasisplit reductive group over $K$.
Then for any $\W(\Psi_0)$-conjugacy class $[\phi]$ of lifts of $\phi_G$, there is a maximal torus $T$ of $G$ such that $[\phi_T]=[\phi]$.
\end{lem}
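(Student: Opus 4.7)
The plan is to reinterpret the lemma as a statement in non-abelian Galois cohomology and resolve it via Steinberg's theorem, as signaled in the remark preceding the statement. Since $G$ is quasisplit, fix a Borel subgroup $B$ of $G$ containing a maximal torus $T_0$, both defined over $K$. By (\ref{thing:tori-type}), the induced Galois action on $T_0$ gives a distinguished $\W(\Psi_0)$-conjugacy class $[\phi_{T_0}]$ of lifts of $\phi_G$. For any other lift $\phi$ of $\phi_G$, the pointwise ratio $w(\sigma)\ceq\phi(\sigma)\phi_{T_0}(\sigma)^{-1}$ is a $1$-cocycle of $\Gal_K$ valued in $\W(\Psi_0)$, with $\Gal_K$ acting on $\W(\Psi_0)$ through $\phi_{T_0}$.

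Every maximal torus of $G_\Ks$ has the form $T=aT_0a^{-1}$ for some $a\in G(\Ks)$. Unwinding the definitions in (\ref{thing:tori-type}), the torus $T$ is defined over $K$ iff the cocycle $n_\sigma\ceq\sigma(a)a^{-1}$ takes values in $N_G(T_0)(\Ks)$, and in that case its image $\bar n_\sigma\in\W(\Psi_0)$ satisfies $\phi_T(\sigma)=\bar n_\sigma\cdot\phi_{T_0}(\sigma)$. Hence $[\phi_T]=[\phi]$ as soon as $\bar n_\sigma=w(\sigma)$, and the lemma reduces to finding $a\in G(\Ks)$ whose coboundary lies in $N_G(T_0)(\Ks)$ and projects to $w$ in $\W(\Psi_0)$.

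I would carry out this search via the exact sequence of $K$-group schemes $1\to T_0\to N_G(T_0)\to W\to 1$, where $W$ denotes the $K$-form of $\W(\Psi_0)$ twisted by $\phi_{T_0}$, in two substeps: \emph{(i)} lift the $W$-cocycle $w$ to an $N_G(T_0)(\Ks)$-valued cocycle $\dot w$, which can be done using Tits/Steinberg representatives of Weyl group elements furnished by a pinning of the quasisplit $G$, together with a $T_0(\Ks)$-coboundary correction to enforce the cocycle condition; \emph{(ii)} produce $a\in G(\Ks)$ with $\sigma(a)a^{-1}=\dot w(\sigma)$, i.e., trivialize $[\dot w]$ in $H^1(K,G)$. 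For \emph{(ii)} we invoke Steinberg's rationality theorem cited in the comment preceding the statement: after the standard reduction to the simply connected case via the isogeny $G^\mr{sc}\onto G^\mr{der}\into G$, and after possibly adjusting $\dot w$ by a $T_0$-coboundary (which does not change its image $w$ in $W$), the class lands in $H^1(K,G^\mr{sc})$, where Steinberg trivializes it.

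I expect the principal difficulty to lie in substep \emph{(ii)}: translating the geometric statement of Steinberg's theorem into the required cohomological vanishing, while carefully tracking the freedom to modify $\dot w$ by $T_0$-coboundaries and the interaction with the simply-connected reduction. Substep \emph{(i)} and the verification $\phi_T=w\cdot\phi_{T_0}$ (up to $\W(\Psi_0)$-conjugacy) are essentially formal once a pinning has been chosen.
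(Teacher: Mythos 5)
The paper gives no proof of this lemma; it is cited verbatim from Larsen--Pink, with the preceding remark only recording that the proof ``rests ultimately on Steinberg's theorem that every rational conjugacy class in a quasisplit simply connected reductive group contains a rational element.'' Your cohomological reformulation in the first two paragraphs is sound and matches the standard setup: maximal tori of $G$ over $K$ up to $G(K)$-conjugacy correspond to classes in $\ker\bigl(H^1(K,\op N_G(T_0))\to H^1(K,G)\bigr)$, the type $[\phi_T]$ being the image in $H^1(K,\W(\Psi_0))$, so the lemma is a surjectivity claim for this kernel.

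The gap is in substep \emph{(ii)}. Steinberg's rationality theorem, in the form the paper cites, is a statement about $\Gal_K$-stable conjugacy classes, not a vanishing theorem for $H^1(K,G^\mr{sc})$; and $H^1(K,G^\mr{sc})$ is \emph{not} trivial in general for quasisplit simply connected groups over the fields this paper cares about (number fields, $\R$, \dots). Since the lemma holds over an arbitrary field $K$, it cannot reduce to a blanket cohomological vanishing, so ``Steinberg trivializes it'' is simply wrong as stated. What the conjugacy-class form of Steinberg's theorem actually gives you is the triviality of certain \emph{specific} classes: pick a strongly regular $g\in T_0(\Ks)$, twist by $w$ to produce an element of $G(\Ks)$ whose $G(\Ks)$-conjugacy class is $\Gal_K$-stable, apply Steinberg (in $G^\mr{sc}$, say) to extract a rational representative $h\in G(K)$, and take $T$ to be the connected centralizer of $h$; the verification $[\phi_T]=[\phi]$ is then exactly the mechanism of Lemma~\ref{lem:phi-T-g-commutes}. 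The lifting in your substep \emph{(i)} --- using Tits' extended Weyl group from a pinning --- is a reasonable ingredient, but on its own it does not produce a reason for $[\dot w]$ to be trivial; it is the regular element (and the conjugacy-class statement) that does the work, and this is the missing idea in your write-up. Also note the auxiliary care needed when $[G,G]$ is not simply connected: the reduction to $G^\mr{sc}$ via a ``$T_0$-coboundary'' is not justified as written, whereas the regular-element route handles the isogeny $G^\mr{sc}\to G$ transparently (conjugate $h$ upstairs, then push back down).
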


\subsection{Splitting field of a conjugacy class}
\label{subsec:splitting}

\begin{thing}
Let $g\in G(\Ks)$ be an element whose $G(\Ks)$-conjugacy class $[g]$ is defined over $K$ (i.e.\ stable under the $\Gal_K$-action on $G(\Ks)$).
For any $K$-representation $\xi\cln G\to\GL_d$, the $\GL_d(\Ks)$-conjugacy class of $\xi(g)$ is defined over $K$, so the characteristic polynomial $P$ of $\xi(g)$ has coefficients in $K$.
If $\xi$ is faithful, we will call the splitting field $F$ of $P$ over $K$ the \tit{splitting field of $[g]$} over $K$.
It is well defined because if $T$ is a maximal torus of $G_\Ks$ containing the semisimple part $g_\ssimp$ of $g$, then $F$ is the subfield of $\Ks$ generated by the values $\chi(g_\ssimp)$ where $\chi\in \Xup(T)$.
\end{thing}

\begin{thing}
\label{thing:pre-phi-T-g}
We now describe the action of $\Gal_K$ on the roots of $P$ in a way intrinsic to $G$.
To do so, we may assume (after conjugating $g$ and replacing it by $g_\ssimp$) that it is contained in a maximal torus $T$ \tit{defined over $K$}.
For simplicity, we will consider the case when $G$ is an adjoint group (i.e.\ has trivial center), $\xi$ is its adjoint representation, and the values $\chi_1(g),\dots,\chi_m(g)$ are distinct, where $\chi_1,\dots,\chi_m$ are the nontrivial characters of $T_\Ks$ making up $\xi|_T$.
(Such $g$ are called \tit{strongly regular}.)

In this case, the values $\chi_1(g),\dots,\chi_m(g)$ are the roots of $P$ different from $1$, so the Galois action on the roots of $P$ permutes the set of characters $\{\chi_1,\dots,\chi_m\}$.
\end{thing}

\begin{lem}
\label{lem:phi-T-g}
Assume $G$ is adjoint and $g\in G(\Ks)$ is a strongly regular element contained in $T(\Ks)$ for a maximal torus $T$ of $G$.
Then the $\Gal_K$-action on $\{\chi_1,\dots,\chi_m\}$ as in \eqref{thing:pre-phi-T-g} extends uniquely to a homomorphism $\phi_{T,g}\cln{\Gal_K}\to\GL(\Xup(T_\Ks))$ given by $\sigma\cdot\chi={}^{\sigma\!}\chi\circ\inn(a_\sigma)$ for some $a_\sigma\in\op N_G(T)(\Ks)$.
\end{lem}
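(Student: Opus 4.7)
The plan is to first establish uniqueness of the extension, then to construct $\phi_{T,g}(\sigma)$ explicitly, and finally to verify the homomorphism property; the main ingredients are strong regularity and the fact that $G$ is adjoint.

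For uniqueness, observe that since $G$ is adjoint, $\Xup(T_\Ks)$ equals the root lattice, so the $\chi_i$'s generate $\Xup(T_\Ks)$ as a $\Z$-module. Thus any $\Z$-linear extension of a permutation of $\{\chi_1,\dots,\chi_m\}$ to $\Xup(T_\Ks)$ is unique.

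For existence, I would fix $\sigma\in\Gal_K$ and produce $a_\sigma\in\op N_G(T)(\Ks)$. Since the conjugacy class $[g]$ is defined over $K$, there exists $a_\sigma\in G(\Ks)$ with ${}^\sigma g = a_\sigma g a_\sigma^{-1}$. Because $T$ is defined over $K$, both $g$ and ${}^\sigma g$ lie in $T(\Ks)$; since both are strongly regular, their centralizers in $G$ coincide with $T$ (adjointness makes the centralizer of a regular semisimple element connected, hence equal to the unique maximal torus containing it). Consequently, conjugation by $a_\sigma$ maps $T = Z_G(g)$ to $T = Z_G({}^\sigma g)$, so $a_\sigma\in\op N_G(T)(\Ks)$. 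The automorphism $\sigma\cdot\chi\ceq{}^{\sigma\!}\chi\circ\inn(a_\sigma)$ of $\Xup(T_\Ks)$ does not depend on the choice of $a_\sigma$ since any two such choices differ by an element of $T(\Ks)$, which acts trivially on $\Xup(T)$ by inner conjugation. Next I would verify that this formula restricts to the permutation described in \eqref{thing:pre-phi-T-g}: for a root $\chi_i$,
\[
({}^\sigma\chi_i\circ\inn(a_\sigma))(g)
=
{}^\sigma\chi_i(a_\sigma g a_\sigma^{-1})
=
{}^\sigma\chi_i({}^\sigma g)
=
\sigma\bigl(\chi_i(g)\bigr),
\]
while the left-hand side equals $\chi_k(g)$ for the unique $k$ with $\sigma\cdot\chi_i=\chi_k$; strong regularity (injectivity of $\chi\mapsto\chi(g)$ on the root set) forces this $\chi_k$ to be precisely the image of $\chi_i$ under the permutation of \eqref{thing:pre-phi-T-g}.

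Finally, I would check that $\phi_{T,g}\cln\Gal_K\to\GL(\Xup(T_\Ks))$ is a group homomorphism. The permutation action of $\Gal_K$ on $\{\chi_1,\dots,\chi_m\}$ arises from the Galois action on the roots of the characteristic polynomial $P$, so it is already a group action. Each $\phi_{T,g}(\sigma)$ is, by the previous paragraph, the unique automorphism of $\Xup(T_\Ks)$ extending the permutation attached to $\sigma$. Uniqueness of such extensions (from the adjointness of $G$) then forces $\phi_{T,g}(\sigma\tau)=\phi_{T,g}(\sigma)\phi_{T,g}(\tau)$, completing the proof. The only subtle point is the potential failure of $\sigma\mapsto a_\sigma$ to be multiplicative, but this is circumvented entirely because the final object is detected on the roots, where the Galois-compatibility is automatic.
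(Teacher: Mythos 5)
Your argument follows essentially the same route as the paper's: uniqueness because the roots span $\Xup(T_\Ks)$ when $G$ is adjoint; existence by choosing $a_\sigma\in G(\Ks)$ conjugating $g$ to $\sigma(g)$ and showing $a_\sigma\in\op N_G(T)(\Ks)$; and the computation $\sigma(\chi(g))={}^{\sigma\!}\chi(a_\sigma g a_\sigma^{-1})$ to match the stated Galois action on the roots of $P$. You spell out the independence of the choice of $a_\sigma$ and the homomorphism property, both of which the paper treats as implicit consequences of uniqueness; this is fine.

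One imprecision is worth flagging in your justification that $a_\sigma\in\op N_G(T)(\Ks)$: the parenthetical claim that adjointness makes the centralizer of a regular semisimple element connected is not true. In $\PGL_2$, the image $\bar g$ of $\diag(1,-1)$ is regular semisimple, but $Z_{\PGL_2}(\bar g)$ has two components (images of diagonal and of antidiagonal matrices). What actually forces $Z_G(g)=T$ is \emph{strong} regularity: since $Z_G(g)^\circ=T$ (regularity), $Z_G(g)\subseteq N_G(T)$, and if $n\in Z_G(g)\cap N_G(T)$ has Weyl image $w$ then $(w^{-1}\alpha)(g)=\alpha(g)$ for every root $\alpha$, so $w=1$ because the values $\alpha(g)$ are pairwise distinct. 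Your proof survives because strong regularity is in your hypotheses, but your stated reason does not. The paper sidesteps centralizers entirely by observing that $a_\sigma T_\Ks a_\sigma^{-1}$ and $T_\Ks$ are both maximal tori containing the regular element $\sigma(g)$, hence equal by uniqueness of the maximal torus through a regular semisimple element; this needs only ordinary regularity.
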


\begin{proof}
Uniqueness presents no trouble, because the characters $\chi_1,\dots,\chi_m$ span $\Xup(T_\Ks)$ by the hypothesis on $G$.
Now fix $\sigma\in\Gal_K$, and find $a_\sigma\in G(\Ks)$ such that $\sigma (g)=a_\sigma ga_\sigma^{-1}$.
Then if $\chi\in \Xup(T_\Ks)$, we have $\sigma(\chi(g))={}^{\sigma\!}\chi(\sigma(g))={}^{\sigma\!}\chi(a_\sigma ga_\sigma^{-1})$.
Now $a_\sigma\in\op N_G(T)(\Ks)$ because $a_\sigma T_\Ks a_\sigma^{-1}$ is the unique maximal torus of $G_\Ks$ containing $a_\sigma g a_\sigma^{-1}=\sigma(g)$ by the regularity of $\sigma(g)$, but $T_\Ks$ already contains $\sigma(g)$ because $T$ is defined over $K$.
Finally, $\chi\mapsto{}^{\sigma\!}\chi\circ\inn(a_\sigma)$ must permute the set $\{\chi_1,\dots,\chi_m\}$ because these are the roots of $G$ relative to $T$; thus if $\sigma(\chi_i(g))=\chi_j(g)$, then ${}^{\sigma\!}\chi_i\circ\inn(a_\sigma)=\chi_j$ by the strong regularity of $g$, so $\phi_{T,g}$ extends the desired $\Gal_K$-action.
\end{proof}

\begin{thing}
\label{thing:phi-T-g}
Two easy observations about the $\phi_{T,g}$ of Lemma \ref{lem:phi-T-g}:
\begin{enumerate}
\item
Since $\phi_{T,g}(\sigma)$ differs from $\phi_T(\sigma)$ by $\inn(a_\sigma)$, the projection of $\theta_a\circ\phi_{T,g}$ to $\Out(\Psi_0)$ is again equal to $\phi_G$ for any $a\in G(\Ks)$ satisfying $aT_\Ks a^{-1}=T_0$.
\item
If $g\in G(K)$, then $\phi_{T,g}=\phi_T$.
\end{enumerate}
Following the notation of \eqref{thing:tori-type}, we write $[\phi_{T,g}]$ for the $\W(\Psi_0)$-conjugacy class of $\theta_a\circ\phi_{T,g}$.
\end{thing}

\noindent
The following easy property of the $\phi_{T,g}$ generalizes \eqref{thing:conjugate-over-K} and will be crucial later on.

\begin{lem}
\label{lem:phi-T-g-commutes}
Assume $G$ is adjoint and $g,g'\in G(\Ks)$ are strongly regular elements contained, respectively, in maximal tori $T,T'$ of $G$.
Suppose $b\in G(\Ks)$ satisfies $bg'b^{-1}=g$.
Then $\theta_b\circ\phi_{T',g'}=\phi_{T,g}$.
\end{lem}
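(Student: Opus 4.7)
The plan is to exploit the strong regularity hypothesis in order to reduce everything to a transparent identity on characters, bypassing the formula $\sigma\cdot\chi={}^{\sigma\!}\chi\circ\inn(a_\sigma)$ from Lemma \ref{lem:phi-T-g} entirely.

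First I would observe that by the strong regularity of $g$, the torus $T$ is the unique maximal torus of $G_{\Ks}$ containing $g$; likewise $T'$ is the unique maximal torus containing $g'$. Since $bg'b^{-1}=g\in bT'b^{-1}$, this forces $bT'b^{-1}=T$, so conjugation by $b$ induces an isomorphism $c_b\cln T'_\Ks\iso T_\Ks$ over $\Ks$, which in turn induces the isomorphism $\theta_b\cln\GL(\Xup(T'_\Ks))\iso\GL(\Xup(T_\Ks))$ appearing in the statement (note that this extends the definition of $\theta_a$ in \eqref{thing:tori-type} to any $b\in G(\Ks)$ conjugating $T'$ to $T$, rather than just conjugating $T$ to $T_0$).

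Next, pullback along $c_b$ identifies the nontrivial characters $\chi_1,\dots,\chi_m$ of $T_\Ks$ appearing in the adjoint representation with characters $\chi'_1,\dots,\chi'_m$ of $T'_\Ks$, namely the nontrivial characters of $T'_\Ks$ appearing in the adjoint representation. The key identity is
\[
\chi'_i(g')=\chi_i(c_b(g'))=\chi_i(bg'b^{-1})=\chi_i(g).
\]
Consequently, for each $\sigma\in\Gal_K$, the permutation of $\{\chi_1,\dots,\chi_m\}$ given by $\sigma\cdot\chi_i=\chi_j\iff\sigma(\chi_i(g))=\chi_j(g)$ corresponds, under the bijection $\chi_i\leftrightarrow\chi'_i$, to the permutation of $\{\chi'_1,\dots,\chi'_m\}$ given by $\sigma\cdot\chi'_i=\chi'_j\iff\sigma(\chi'_i(g'))=\chi'_j(g')$.

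Finally, I would invoke the uniqueness clause of Lemma \ref{lem:phi-T-g}: since $G$ is adjoint, the $\chi_i$ (resp.\ $\chi'_i$) span $\Xup(T_\Ks)$ (resp.\ $\Xup(T'_\Ks)$), so $\phi_{T,g}(\sigma)$ and $\phi_{T',g'}(\sigma)$ are each determined by their actions on these character sets. Combined with the correspondence of permutations in the previous paragraph, this yields $\theta_b\circ\phi_{T',g'}=\phi_{T,g}$. There is no serious obstacle here beyond careful bookkeeping of identifications; the only small subtlety is confirming that the definition of $\theta_b$ is exactly the one that transports $\phi_{T',g'}$ to $\phi_{T,g}$ via the character correspondence $\chi_i\leftrightarrow\chi'_i$, which is immediate once one unwinds the definitions.
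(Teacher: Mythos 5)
Your proof is correct, and it takes a genuinely different route from the paper's. The paper's proof opens both sides with the explicit formula from Lemma \ref{lem:phi-T-g}: it writes $\phi_{T,g}(\sigma)(\chi)={}^{\sigma\!}\chi\circ\inn(a_\sigma)$ and unwinds $\theta_b\circ\phi_{T',g'}$ to ${}^{\sigma\!}\chi\circ\inn(\sigma(b)a'_\sigma b^{-1})$, then observes that $\sigma(b)a'_\sigma b^{-1}$ and $a_\sigma$ both conjugate $g$ to $\sigma(g)$, so by regularity the two $\inn(\cdot)$'s agree on $\Xup(T_\Ks)$. You instead bypass that formula entirely: you exploit the character-valuation identity $\chi_i'(g')=\chi_i(bg'b^{-1})=\chi_i(g)$ (with $\chi_i'\ceq\chi_i\circ\inn(b)$) to show the two Galois permutations of the roots correspond under $c_b^*$, and then invoke the uniqueness/spanning clause already recorded in Lemma \ref{lem:phi-T-g} to upgrade equality on the roots to equality of linear automorphisms. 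Both proofs ultimately rest on (strong) regularity, but your argument uses it through the injectivity of $\chi\mapsto\chi(g)$ on the roots, while the paper uses it to identify $\inn(\cdot)$-actions with the same effect on $g$; your route is arguably more conceptual and closer in spirit to how Lemma \ref{lem:phi-T-g} was set up. The one step you wave at as ``immediate once one unwinds the definitions'' is in fact the crux and deserves a line: $\theta_b(\phi_{T',g'}(\sigma))(\chi_i)=\phi_{T',g'}(\sigma)(\chi_i\circ\inn(b))\circ\inn(b^{-1})=\chi'_j\circ\inn(b^{-1})=\chi_j$, which is what makes the spanning argument close.
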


\begin{proof}
Let $\sigma\in\Gal_K$ and $\chi\in \Xup(T_\Ks)$.
Then $\phi_{T,g}(\sigma)(\chi)={}^{\sigma\!}\chi\circ\inn(a_\sigma)$, while
\begin{align*}
(\theta_b\circ\phi_{T',g'})(\sigma)(\chi)
&=
\phi_{T',g'}(\sigma)(\chi\circ\inn(b))\circ\inn(b^{-1})
\\&=
{}^{\sigma\!}\chi\circ\inn(\sigma(b)a_\sigma'b^{-1}).
\end{align*}
But $(\sigma(b)a_\sigma'b^{-1})g(\sigma(b)a_\sigma'b^{-1})^{-1}=a_\sigma ga_\sigma$, so $\inn(\sigma(b)a_\sigma'b^{-1})$ and $\inn(a_\sigma)$ induce the same automorphism of $\GL(\Xup(T_\Ks))$ by the regularity of $g$.
\end{proof}

\subsection{Setup of the main theorems (case \texorpdfstring{\boldmath$K=\Q\hspace{-10pt}\Q\hspace{-10pt}\Q\hspace{-10pt}\Q\hspace{-10pt}\Q\hspace{-10pt}\Q$}{K=Q})}

We continue with the notation of \textsection\textsection\ref{subsec:groups-and-tori}--\ref{subsec:splitting}, but this time with $K=\Q$.

\begin{thing}
\label{thing:main-thm-G}
Fix an algebraic closure $\ol\Q$ of $\Q$.
As above we have a connected reductive group $G$, and from now on we (for convenience) assume $T_0$ is a maximal torus of $G$ rather than $G_{\ol\Q}$ (so $\Psi_0$ is now the root datum attached to $T_{0,\ol\Q}$, etc.).
Let $E|\Q$ be the splitting extension of $\phi_G$, i.e.\ the extension for which $\Ker(\phi_G)=\Gal_E$.
\end{thing}

\begin{thing}
\label{thing:embeddings}
For each prime $\ell$, fix an algebraic closure $\ol{\Q_\ell}$ of $\Q_\ell$.
Let $\iota_\ell$ be a field embedding $\ol\Q\into\ol{\Q_\ell}$.
It induces an embedding $\Gal_{\Q_\ell}\into\Gal_\Q$, and given a torus $T$ over $\Q$, maps $ \Xup(T_{\ol\Q})\to \Xup(T_{\ol{\Q_\ell}})$ and ${\GL(\Xup(T_{\ol\Q}))\to\GL(\Xup(T_{\ol{\Q_\ell}}))}$ via base change.
By choosing an isomorphism $T_{\ol\Q}\cong\G_{\mr m}^r$, one sees that the latter are isomorphisms (but depend on $\iota_\ell$).
If $T$ is a maximal torus of $G$, then $\iota_\ell$ induces isomorphisms
\begin{equation}
\label{eq:compatibility-0}
\Aut(\Psi)
\iso
\Aut(\Psi_\ell)
\quad
\text{and}
\quad
\W(\Psi)
\iso
\W(\Psi_\ell),
\end{equation}
where $\Psi$ and $\Psi_\ell$ are, respectively, the root data attached to $T_{\ol\Q}$ and $T_{\ol{\Q_\ell}}$.

Having picked a embedding $\iota_\ell$, we are safe to forget it in the notation and \tit{identify} all the $\Q$- and $\Q_\ell$-versions of the above objects, because of \eqref{eq:compatibility-0} and because the following diagrams commute:
\[
\begin{tikzcd}
\Gal_{\Q_\ell}\arrow[r,"\phi_{T_{\Q_\ell}}"]\arrow[d,hook]&
\GL(\Xup(T_{\ol{\Q_\ell}}))\arrow[d,"\sim"]\\
\Gal_\Q\arrow[r,"\phi_T"]&
\GL(\Xup(T_{\ol\Q}))
\end{tikzcd}
\]
where $T$ is any torus over $\Q$,
\begin{equation}
\label{eq:compatibility-2}
\begin{tikzcd}
[column sep=large]
\Gal_{\Q_\ell}\arrow[r,"\phi_{T_{\Q_\ell},\iota_\ell(h)}"]\arrow[d,hook]&
\GL(\Xup(T_{\ol{\Q_\ell}}))\arrow[d,"\sim"]\\
\Gal_\Q\arrow[r,"\phi_{T,h}"]&
\GL(\Xup(T_{\ol\Q}))
\end{tikzcd}
\end{equation}
where $T$ is any maximal torus of $G$ and $h\in T(\ol\Q)$ any regular semisimple element, and
\begin{equation}
\label{eq:compatibility-3}
\begin{tikzcd}
\GL(\Xup(T'_{\ol{\Q_\ell}}))\arrow[r,"\theta_{\iota_\ell(a)}"]\arrow[d,"\sim"']&
\GL(\Xup(T_{\ol{\Q_\ell}}))\arrow[d,"\sim"]\\
\GL(\Xup(T'_{\ol\Q}))\arrow[r,"\theta_a"]&
\GL(\Xup(T_{\ol\Q}))
\end{tikzcd}
\end{equation}
where $T,T'$ are any maximal tori of $G$ and $a\in G(\ol\Q)$ is any element satisfying $aT'_{\ol\Q}a^{-1}=T_{\ol\Q}$.
Thus, for example, if $T_\ell$ is a maximal torus of $G_{\Q_\ell}$, then (via $\iota_\ell$) we view $[\phi_{T_\ell}]$ as a function $\Gal_{\Q_\ell}\to[\Aut(\Psi_0)]$ and its domain as a subgroup of $\Gal_\Q$.
\end{thing}

\begin{thing}
\label{thing:frobs}
Given a prime $\ell$, we will let $\Frob_\ell\subseteq\Gal_{\Q_\ell}$ denote the Frobenius coset of $\Q_\ell$.
In accordance with \eqref{thing:embeddings}, if $\iota_\ell$ has been chosen, we view $\Frob_\ell$ as a subset of $\Gal_\Q$.
Then for a finite Galois extension $F|\Q$ unramified at $\ell$, there is a Frobenius \tit{element} in $\Gal(F|\Q)$ defined by $\Frob_\ell$.
Moreover if $\sigma$ is any element of the Frobenius conjugacy class of $\ell$ in $\Gal(F|\Q)$, one can pick $\iota_\ell$ so that the image of $\Frob_\ell$ in $\Gal(F|\Q)$ is $\{\sigma\}$.
\end{thing}

\begin{thing}
\label{thing:super-jordan-setting}
Fix a prime $\ell$ unramified in $E|\Q$ and an embedding $\iota_\ell$.
Let $\Omega$ be the preimage of $\phi_G(\Frob_\ell)$ in $\Aut(\Psi_0)$; it is a coset of $\W(\Psi_0)$.
If $T$ is an \tit{unramified} maximal torus of $G_{\Q_\ell}$, i.e.\ $\phi_T$ factors through the Galois group of the maximal unramified extension of $\Q_\ell$, then $[\phi_T](\Frob_\ell)$ is a well defined element $\omega\in[\Omega]$.\footnote{
Recall that $[\Omega]$ is the set of orbits under the action of $\W(\Psi_0)$ on $\Omega$ by conjugation.
This makes sense because $\W(\Psi_0)$ is normal in $\Aut(\Psi_0)$.
}
Say, in this case, that the torus $T$ \tit{corresponds to $\omega$}.
\end{thing}

\begin{lem}
\label{lem:super-jordan}
In the context of \eqref{thing:super-jordan-setting}, suppose $M$ is a reductive subgroup of $G_{\Q_\ell}$ such that for each $\omega\in[\Omega]$, there exists an unramified maximal torus of $G_{\Q_\ell}$ which is included in $M$ and corresponds to $\omega$.
Then $M=G_{\Q_\ell}$.
\end{lem}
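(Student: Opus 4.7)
The plan is to reduce to a combinatorial statement about the Weyl group of $G$. Since $G$ is connected and every torus $T_\omega$ lies in $M^\circ$, it suffices to prove $M^\circ = G_{\Q_\ell}$, so we may assume $M$ is connected. Then $M$ is a connected reductive subgroup of $G_{\Q_\ell}$ containing a maximal torus of $G_{\Q_\ell}$, hence of maximal rank. The root system $\Psi_M \subseteq \Psi_0$ of $M$ (under any identification $\theta_a$) is a closed subroot system---the Chevalley commutator formula forces $U_{\alpha+\beta} \subseteq [U_\alpha, U_\beta] \subseteq M$ whenever $\alpha, \beta \in \Psi_M$ and $\alpha+\beta \in \Psi_0$---and $M$ is determined by $\Psi_M$. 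A short case-by-case inspection of irreducible root systems confirms that any closed subroot system of $\Psi_0$ whose Weyl group equals $\W(\Psi_0)$ must equal $\Psi_0$. Hence it suffices to prove $\W_M := \W(\Psi_M) = \W(\Psi_0)$.

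The next step exploits the abundance of maximal tori inside $M$. Fix one such torus $T_1 \subseteq M$ along with $a \in G(\ol{\Q_\ell})$ satisfying $a T_{1,\ol{\Q_\ell}} a^{-1} = T_{0,\ol{\Q_\ell}}$, and set $\omega_* := (\theta_a \circ \phi_{T_1})(\Frob_\ell) \in \Omega$, so that $\omega_*$ represents $[\phi_{T_1}](\Frob_\ell) \in [\Omega]$. For any other class $\omega \in [\Omega]$, the hypothesis gives an unramified maximal torus $T_\omega \subseteq M$ of $G_{\Q_\ell}$ with $[\phi_{T_\omega}](\Frob_\ell) = \omega$. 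Because $T_1$ and $T_\omega$ are both maximal tori of the connected group $M$, they are $M(\ol{\Q_\ell})$-conjugate, say $mT_\omega m^{-1} = T_1$ for some $m \in M(\ol{\Q_\ell})$. Setting $n_\sigma := \sigma(m)m^{-1}$ for $\sigma \in \Gal_{\Q_\ell}$, the fact that $T_1, T_\omega$ are defined over $\Q_\ell$ forces $n_\sigma \in N_M(T_1)(\ol{\Q_\ell})$, and a direct cocycle computation in the style of Lemma \ref{lem:phi-T-g-commutes} gives the identity $(\theta_{am} \circ \phi_{T_\omega})(\sigma) = \theta_a([n_\sigma]) \cdot (\theta_a \circ \phi_{T_1})(\sigma)$ in $\Aut(\Psi_0)$, where $[n_\sigma] \in \W_M(T_1)$ is the Weyl class of $n_\sigma$. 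Evaluating at $\sigma = \Frob_\ell$ shows that the orbit $\omega$ contains an element of the form $w_\omega \omega_*$ with $w_\omega \in \W_M$. Varying $\omega$, every $\W(\Psi_0)$-conjugacy class in the coset $\Omega = \W(\Psi_0)\omega_*$ meets $\W_M \omega_*$.

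The essential remaining step is to conclude $\W_M = \W(\Psi_0)$ from this covering condition. Passing to the semidirect product $\W(\Psi_0) \rtimes \langle \omega_* \rangle$---where $\omega_*$ normalizes $\W_M$, since Galois-stability of $\Psi_M$ translates to $\omega_*$-stability---the condition reads: the coset $\W(\Psi_0)\omega_*$ is covered by the $\W(\Psi_0)$-conjugates of $\W_M \omega_*$. This is a twisted analogue of Jordan's classical theorem, that a proper subgroup of a finite group cannot meet every conjugacy class. The twisted version is not automatic for arbitrary finite groups under arbitrary automorphisms---even abelian groups with inversion supply counterexamples---but it does hold in the Weyl-group setting here, because of the constrained structure of $\W(\Psi_0)$ and of Weyl subgroups of subroot systems. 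This is the principal technical obstacle, and is essentially the combinatorial content developed in the Weyl-group-theoretic sections of \cite{larsen-pink}, which we would invoke or adapt directly. Once $\W_M = \W(\Psi_0)$ is established, the first step yields $M^\circ = G_{\Q_\ell}$, and since $M \supseteq M^\circ$, we conclude $M = G_{\Q_\ell}$.
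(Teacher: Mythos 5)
Your proposal follows essentially the same strategy as the paper, but leaves a genuine gap at the crux. The reduction to connected $M$ of maximal rank, the observation that $M$ is determined by its (closed) subroot system $\Psi_M \subseteq \Psi_0$, and the cocycle computation showing that the $M(\ol{\Q_\ell})$-conjugacy of the various $T_\omega$ to a fixed $T_1\subseteq M$ forces every $\W(\Psi_0)$-conjugacy class in $\Omega=\W(\Psi_0)\omega_*$ to meet $\W_M\omega_*$ --- all of this is correct and tracks the paper's setup. The problem is the final step. You correctly identify that what is needed is a ``twisted Jordan theorem'' ($\W_M=\W(\Psi_0)$ from the covering condition), correctly observe that such a statement is \emph{false} for general finite groups and automorphisms, and then simply assert that ``it does hold in the Weyl-group setting here'' because of ``the combinatorial content developed in the Weyl-group-theoretic sections of \cite{larsen-pink},'' without identifying any precise statement. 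That assertion \emph{is} the lemma; waving at a section of a reference is not a proof.

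The paper closes this exact gap by deliberately coarsening. From the condition that $\Omega'$ (your $\W_M\omega_*$) meets every class of $[\Omega]$, it extracts the weaker statement that the set of characteristic polynomials of elements of $\Omega'$ acting on $\Xup(T_{0,\ol{\Q_\ell}})$ equals the corresponding set for $\Omega$. That weaker statement is precisely the hypothesis of \cite[Theorem 2.1]{larsen-pink}, a specific citable result whose conclusion is that the Weyl groups of $M$ and $G$ agree; the Bourbaki fact that the Weyl group and character lattice determine the roots up to rational multiples then gives $M=G_{\Q_\ell}$. The paper's remark following the lemma even notes that this coarsening discards information and that one would hope to find a shorter abstract argument using the full conjugacy-class data (exactly what you are reaching for), but the authors could not find one outside the split case. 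To repair your proof, make the coarsening explicit: your covering condition implies the equality of characteristic-polynomial sets, and then \cite[Theorem 2.1]{larsen-pink} applies verbatim. With that bridge in place, your argument is complete and matches the paper's in substance.
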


\begin{proof}
The hypothesis remains true after conjugating the torus $T_0$, so we may assume that $T_{0,\Q_\ell}\subseteq M$.
Let $\Psi_0'$ be the root datum attached to the pair $(T_{0,\ol{\Q_\ell}},M_{\ol{\Q_\ell}})$, and let $\Omega'$ be the preimage of $\{\phi_M(\Frob_\ell)\}$ in $\Aut(\Psi_0')$.
Then $\W(\Psi_0')\subseteq\W(\Psi_0)$ and $\Omega'\subseteq\Omega$, and the hypothesis implies that $\Omega'$ meets every element of $[\Omega]$.
In particular, the set
\[
\{
\text{characteristic polynomial of $\alpha$ on $\Xup(T_{0,\ol{\Q_\ell}})$}\cln\alpha\in\Omega
\}
\]
equals the corresponding set with $\Omega'$ in place of $\Omega$.
Thus by \cite[Theorem 2.1]{larsen-pink}, the Weyl group of $M$ is isomorphic to that of $G$.
Since the Weyl group and character lattice together determine the roots up to rational multiples \cite[Chapter IV, 1.5, Theorem 2(\tit{iv})]{bourbaki-lie}, every root of $G$ is also a root of $M$, hence $M=G_{\Q_\ell}$.
\end{proof}

\begin{rmk}
In the previous proof, we passed from $\W(\Psi_0)$-conjugacy classes in $\Omega$ to the much coarser characteristic polynomials of Frobenius acting on character lattices.
Thus one expects that we should be able to avoid the use of \cite[Theorem 2.1]{larsen-pink}, whose proof is somewhat complicated and relies on very explicit computations with semisimple groups (ultimately resorting to data tabulated in the \textsc{Atlas} of Finite Groups).
Indeed, when $\Gal_\Q$ acts on the Dynkin diagram of $G$ only by permuting its connected components (e.g.\ if $G$ is split or has no factors of type $\mr A_n$ ($n\geq2$), $\mr D_n$, or $\mr E_6$), one can replace \tit{loc.\ cit.\ }by a short abstract argument, but we did not find one that works in general.
(In fact, when the action on the Dynkin diagram is trivial, one has only to observe that the only subgroup of $\W(\Psi_0)$ which meets every conjugacy class of $\W(\Psi_0)$ is $\W(\Psi_0)$ itself.)
\end{rmk}

\section{Theorem \texorpdfstring{\ref{thm:A}}{A}}
\label{sec:thm-A-pf}

We will freely use the notation of \textsection\ref{sec:bg}.

\subsection{Compatible collections}
\label{subsec:compatible}

The following definition generalizes \cite[Definition 6.5]{larsen-pink}, in which $G=\GL_r$.

\begin{defn}
\label{defn:compatible}
An \tit{F-group} is a pair $(\Gamma,\cF)$, where $\Gamma$ is a profinite group and $\cF$ is a dense subset of $\Gamma$.
A set $\{\rho_\ell\cln\Gamma\to G(\Q_\ell)\}_{\ell\in\cL}$ of $G$-representations, where $\cL$ is a set of primes, is \tit{compatible} (relative to $\cF$) if there exists a function $\ff\mapsto\cS_\ff$ from $\cF$ to the set of finite subsets of $\cL$ with the following properties:
\begin{enumerate}
\item
for each $\ff\in\cF$, there exists $g\in G(\ol\Q)$ whose conjugacy class $[g]$ is defined over $\Q$ (i.e.\ $\Gal_\Q$-stable) such that $\rho_\ell(\ff)_\ssimp$ is conjugate to $g$ in $G(\ol{\Q_\ell})$ for each $\ell\in\cL\setminus\cS_\ff$.
(This property of $g$ does not depend on the choice of embeddings $\iota_\ell$.)
\item
for each $\ell_1,\dots,\ell_n\in\cL$, the set $\cF^{(\ell_1,\dots,\ell_n)}\ceq\{\ff\in\cF\cln\ell_1,\dots,\ell_n\notin\cS_\ff\}$ is dense in $\Gamma$.
\end{enumerate}
\end{defn}

\noindent
Before proving the main theorem, we transform the $\rho_\ell$ into information about maximal tori of the $M_\ell$, analogously to \cite[7.4--7.5]{larsen-pink}, at least in a special case.
This lemma will be instrumental in the proof of the main theorem, and the use of the functions $[\phi_\fT]$ constructed herein is a key difference between the arguments of this paper and those of \cite{larsen-pink}.

\begin{lem}
\label{lem:torus-info}
Let $\{\rho_\ell\cln\Gamma\to G(\Q_\ell)\}_{\ell\in\cL}$ a compatible collection of $G$-representations of an F-group, where $G$ is an adjoint group over $\Q$.
For each $\ell\in\cL$, let $M_\ell\ceq\ol{\Img(\rho_\ell)}{}^\Zar$, and assume $M_\ell$ is reductive and of maximal rank in $G_{\Q_\ell}$.
Fix $\iota_\ell$ for each $\ell$ as in \eqref{thing:embeddings}.

Let $\fT\ceq\{(\ell_1,T_1),\dots,(\ell_n,T_n)\}$ be a set of pairs consisting of distinct primes $\ell_1,\dots,\ell_n\in\cL$ and a maximal torus $T_i$ of $M_{\ell_i}$ for each $i$.
Then there exist a $\W(\Psi_0)$-conjugacy class $[\phi_\fT]$ of maps ${\Gal_\Q}\to\Aut(\Psi_0)$ lifting $\phi_G$ and a maximal torus $T_{\fT,\ell}$ of $M_\ell$ for all but finitely many $\ell\in\cL$ such that
$[\phi_{T_{\fT,\ell}}]=[\phi_\fT]|_{\Gal_{\Q_\ell}}$ for every such $\ell$ and
$T_{\fT,\ell_i}=T_i$ for each $i$.
Moreover, $[\phi_\fT]$ and the $T_{\fT,\ell}$ do not depend on the $\iota_\ell$.
\end{lem}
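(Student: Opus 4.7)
My plan is to select a single $\mathfrak{f}\in\cF^{(\ell_1,\ldots,\ell_n)}$ adapted simultaneously to all the $T_i$, use compatibility to extract $g\ceq g_\mathfrak{f}\in G(\ol\Q)$, and then take $[\phi_\mathfrak{T}]\ceq[\phi_{T_g,g}]$ for some maximal torus $T_g$ of $G_{\ol\Q}$ containing $g$. The tori $T_{\fT,\ell}$ will be defined from the semisimple part of $\rho_\ell(\mathfrak{f})$ for generic $\ell$, and set equal to $T_i$ by hand at $\ell=\ell_i$; the choice of $\mathfrak{f}$ will make the two prescriptions agree at $\ell_i$.

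First I would arrange that $\rho_{\ell_i}(\mathfrak{f})\in T_i(\Q_{\ell_i})$ and is strongly regular in $G_{\Q_{\ell_i}}$ for each $i$, and that $\rho_\ell(\mathfrak{f})$ is strongly regular for all but finitely many other $\ell\in\cL$. Because $M_{\ell_i}$ is of maximal rank in $G_{\Q_{\ell_i}}$, the strongly regular locus of $T_i$ is Zariski-open and dense in $T_i$, so the content is that $\rho_{\ell_i}(\Gamma)\cap T_i(\Q_{\ell_i})$ is Zariski-dense in $T_i$. I expect this to follow from Serre's openness theorem (as invoked in Remark \ref{rmk:main}(c)) combined with a small argument handling the radical when $M_{\ell_i}$ is merely reductive; this is the main technical obstacle. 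Granting the density, the conditions on $\mathfrak{f}$ cut out an open subset of $\Gamma$ which meets the dense subset $\cF^{(\ell_1,\ldots,\ell_n)}$.

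With such an $\mathfrak{f}$ chosen, $g\ceq g_\mathfrak{f}\in G(\ol\Q)$ is strongly regular because some $G(\ol{\Q_{\ell_i}})$-conjugate of it is, so after conjugating we may assume $g$ lies in a maximal torus $T_g$ of $G_{\ol\Q}$. Set $[\phi_\mathfrak{T}]\ceq[\phi_{T_g,g}]$; it is a $\W(\Psi_0)$-conjugacy class lifting $\phi_G$ by part (\textit{i}) of \eqref{thing:phi-T-g}, and depends only on $[g]$ by Lemma \ref{lem:phi-T-g-commutes}. For the tori, set $T_{\fT,\ell_i}\ceq T_i$, and for the remaining $\ell\notin\cS_\mathfrak{f}$ at which $\rho_\ell(\mathfrak{f})$ is strongly regular, let $T_{\fT,\ell}$ be the unique maximal torus of $G_{\Q_\ell}$ containing $\rho_\ell(\mathfrak{f})_\ssimp$; maximal rank of $M_\ell$ places $T_{\fT,\ell}$ inside $M_\ell$, and at $\ell=\ell_i$ the two prescriptions return $T_i$ by the defining property of $\mathfrak{f}$.

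To verify $[\phi_{T_{\fT,\ell}}]=[\phi_\mathfrak{T}]|_{\Gal_{\Q_\ell}}$ at one such $\ell$, fix $\iota_\ell$ and note that $\rho_\ell(\mathfrak{f})_\ssimp\in T_{\fT,\ell}(\Q_\ell)$, so part (\textit{ii}) of \eqref{thing:phi-T-g} gives $\phi_{T_{\fT,\ell},\rho_\ell(\mathfrak{f})_\ssimp}=\phi_{T_{\fT,\ell}}$. Compatibility provides $b\in G(\ol{\Q_\ell})$ with $b\iota_\ell(g)b^{-1}=\rho_\ell(\mathfrak{f})_\ssimp$, and Lemma \ref{lem:phi-T-g-commutes} identifies $[\phi_{T_{\fT,\ell},\rho_\ell(\mathfrak{f})_\ssimp}]$ with $[\phi_{\iota_\ell(T_g),\iota_\ell(g)}]$; the diagrams \eqref{eq:compatibility-2} and \eqref{eq:compatibility-3} of \eqref{thing:embeddings} then identify this with $[\phi_{T_g,g}]|_{\Gal_{\Q_\ell}}=[\phi_\mathfrak{T}]|_{\Gal_{\Q_\ell}}$. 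Independence from $\iota_\ell$ is immediate from the same diagrams, since $g\in G(\ol\Q)$ and $\rho_\ell(\mathfrak{f})_\ssimp\in G(\Q_\ell)$ both live intrinsically.
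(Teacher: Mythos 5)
Your overall strategy matches the paper's: fix a single $\ff\in\cF^{(\ell_1,\dots,\ell_n)}$ adapted to the $T_i$, extract $g$ by compatibility, set $[\phi_\fT]\ceq[\phi_{T,g}]$, and define $T_{\fT,\ell}$ from $\rho_\ell(\ff)_\ssimp$; the diagram chase you perform to verify $[\phi_{T_{\fT,\ell}}]=[\phi_\fT]|_{\Gal_{\Q_\ell}}$ is essentially the one in the paper. However, the crucial step of producing $\ff$ is where you deviate, and that step has a genuine gap.

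You argue that the conditions imposed on $\ff$ ``cut out an open subset of $\Gamma$'' and invoke the density of $\cF^{(\ell_1,\dots,\ell_n)}$. But the condition $\rho_{\ell_i}(\ff)\in T_i(\Q_{\ell_i})$ is not open: $T_i(\Q_{\ell_i})$ is a lower-dimensional closed subgroup of $G(\Q_{\ell_i})$, so $\rho_{\ell_i}^{-1}(T_i(\Q_{\ell_i}))$ is a closed subgroup of $\Gamma$ with empty interior. Intersecting with the (possibly proper) dense subset $\cF^{(\ell_1,\dots,\ell_n)}$ therefore need not produce anything. Your preliminary observation that $\rho_{\ell_i}(\Gamma)\cap T_i(\Q_{\ell_i})$ is Zariski-dense in $T_i$ is correct under openness of $\Img(\rho_{\ell_i})$, but it does not by itself yield a single element of $\cF^{(\ell_1,\dots,\ell_n)}$ landing in all $T_i$ simultaneously. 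The paper sidesteps this by asking only that $\rho_{\ell_i}(\ff)_\ssimp$ be \emph{conjugate in $G(\Q_{\ell_i})$} to an element of $T_i(\Q_{\ell_i})$ --- a weaker, open condition --- and it still does not argue this from scratch but rather invokes \cite[Proposition 7.3]{larsen-pink}, which is the nontrivial input here. (After locating $\ff$ in this weaker form, one concludes $[\phi_{T_{\fT,\ell_i}}]=[\phi_{T_i}]$ by \eqref{thing:conjugate-over-K} and simply resets $T_{\fT,\ell_i}\ceq T_i$.) You also flag, but do not close, the issue of extending Serre's openness theorem from semisimple to reductive $M_{\ell_i}$; that too is packaged into the Larsen--Pink citation.

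Two smaller remarks: the torus $T_g$ containing $g$ needs to be defined over $\Q$, not just over $\ol\Q$, for $\phi_{T_g,g}$ to be a homomorphism on all of $\Gal_\Q$ in the sense of Lemma \ref{lem:phi-T-g} (the paper says this explicitly). And you should make sure the chosen $\ff$ also has $\rho_\ell(\ff)_\ssimp$ strongly regular for all $\ell$ outside a finite set, not merely for $\ell_1,\dots,\ell_n$; this is again handled by the cited proposition.
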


\begin{proof}
By \cite[Proposition 7.3]{larsen-pink}, we can find $\ff\in\cF^{(\ell_1,\dots,\ell_n)}$ such that $\rho_{\ell_i}(\ff)_\ssimp$ is, for each $i$, conjugate \tit{in $G(\Q_{\ell_i})$} to an element of $T_i(\Q_{\ell_i})$ and is strongly regular in the sense of \eqref{thing:phi-T-g}.
Therefore, letting $T_{\fT,\ell}$ be the unique maximal torus of $M_\ell$ containing $\rho_\ell(\ff)_\ssimp$ for each $\ell\in\cL\setminus\cS_\ff$, we have $[\phi_{T_{\fT,\ell}}]=[\phi_{T_i}]$ for each $i$ by \eqref{thing:conjugate-over-K}.
Using (\ref{defn:compatible}.\tit a), find an element $g\in G(\ol\Q)$ whose conjugacy class $[g]$ is defined over $\Q$ and such that $g$ is conjugate in $G(\ol{\Q_\ell})$ to $\rho_\ell(\ff)_\ssimp$ for each $\ell\in\cL\setminus\cS_\ff$.
After replacing $g$ by a conjugate, we may assume it is contained in $T(\ol\Q)$ for a unique maximal torus $T$ of $G$ (defined over $\Q$!).
By Lemma \ref{lem:phi-T-g-commutes}, $[\phi_{T_{\fT,\ell}}]=[\phi_{T,g}]|_{\Gal_{\Q_\ell}}$, so $[\phi_\fT]\ceq[\phi_{T,g}]$ works.

Let us explain the preceding sentence in full details.
Consider the following diagram, where $a\in G(\ol\Q)$ and $a_\ell\in G(\ol{\Q_\ell})$ are any elements which make sense in the diagram, and $b\in G(\ol{\Q_\ell})$ satisfies $b\cdot\rho_\ell(\ff)_\ssimp\cdot b^{-1}=g$:
\[
\begin{tikzcd}
[column sep=large]
\Gal_{\Q_\ell}\arrow[dd,hook]\arrow[rd,"\smash{\phi_{T_{\Q_\ell},g}}",out=-45,in=180,pos=0.35]\arrow[r,"\phi_{T_{\fT,\ell}}"]&
\GL(\Xup(T_{\fT,\ell,\ol{\Q_\ell}}))\arrow[d,"\theta_b"]\arrow[r,"\theta_{a_\ell}"]&
\GL(\Xup(T_{0,\ol{\Q_\ell}}))\arrow[dd,"\sim"]\\
&\GL(\Xup(T_{\ol{\Q_\ell}}))\arrow[d,"\sim"]\arrow[ru,"\smash{\theta_{a}}",out=0,in=225]\\
\Gal_\Q\arrow[r,"\phi_{T,g}"]&
\GL(\Xup(T_{\ol\Q}))\arrow[r,"\theta_a"]&
\GL(\Xup(T_{0,\ol\Q})).
\end{tikzcd}
\]
It suffices to prove that the outer square commutes up to conjugation by $\W(\Psi_0)$.
By (\ref{eq:compatibility-2}--\ref{eq:compatibility-3}), the bottom squares commute on the nose, and the upper-right triangle certainly commutes up to conjugation by the Weyl group.
Finally, the upper-left triangle commutes by Lemma \ref{lem:phi-T-g-commutes}.
\end{proof}

\begin{thing}
\label{thing:frob-splitting}
We will denote by $F_\fT$ the splitting field of the map $\phi_\fT$ of Lemma \ref{lem:torus-info} ($\phi_\fT$ being a representative of conjugacy class $[\phi_\fT]$), i.e.\ the field such that $\phi_\fT$ factors through an injective map $\Gal(F_\fT|\Q)\into\Aut(\Psi_0)$.
By construction, $E\subseteq F_\fT$, and $F_\fT\Q_\ell$ is the splitting field of $T_{\fT,\ell}$ for each $\ell$, and $[F_\fT:\Q]\leq\#{\Aut(\Psi_0)}$.
\end{thing}

\subsection{Statement and proof}
\label{subsec:thm-A}

\begin{thm}
\label{thm:main}
Let $\{\rho_\ell\cln\Gamma\to G(\Q_\ell)\}_{\ell\in\cL}$ be a compatible collection of $G$-representations of an F-group, where $G$ is a connected reductive group over $\Q$ and $\cL$ is a set of Dirichlet density $1$.
Let $E|\Q$ be the splitting extension of $\phi_{G^\mr{ad}}$ as in \eqref{thing:main-thm-G}, and suppose for each conjugacy class $C$ of $\Gal(E|\Q)$, there exists $\ell_C\in\cL$ such that
\begin{enumerate}
\item
$\ell_C$ is unramified in $E|\Q$.
\item
the Frobenius conjugacy class of $\ell_C$ in $\Gal(E|\Q)$ is $C$.
\item
$G_{\Q_{\ell_C}}$ is quasisplit.
\item
$\ol{\Img(\rho_{\ell_C})}{}^\Zar=G_{\Q_{\ell_C}}$.
\end{enumerate}
Then the set
$
\{
\ell\in\cL
\cln
\ol{\Img(\rho_\ell)}{}^\Zar=G_{\Q_\ell}
\}
$
has Dirichlet density $1$.
\end{thm}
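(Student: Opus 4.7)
The plan is to apply Lemma \ref{lem:super-jordan} one prime at a time. That lemma reduces the equality $M_\ell = G_{\Q_\ell}$ (with $M_\ell$ reductive of maximal rank) to verifying that $M_\ell$ contains an unramified maximal torus of $G_{\Q_\ell}$ corresponding to each orbit $\omega \in [\Omega_\ell]$, where $\Omega_\ell$ is the $\W(\Psi_0)$-coset sitting over $\phi_G(\Frob_\ell)$. First I would reduce to $G$ adjoint (so that $\phi_G = \phi_{G^\mr{ad}}$ is unchanged and the abelian part is handled by Serre's result on constancy of rank and component groups), and then check that $M_\ell$ is connected reductive of maximal rank in $G_{\Q_\ell}$ for every $\ell \in \cL$, using the hypothesis for a chosen $\ell_C \in \cR$ together with Serre's constancy theorems, plus a standard compatibility argument (Frobenius elements are semisimple and their images are dense in $M_\ell$) to obtain reductivity.

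Next, I would build candidate tori using the $G$-good hypothesis. For each conjugacy class $C$ of $\Gal(E|\Q)$, fix $\ell_C \in \cR$ and an embedding $\iota_{\ell_C}$. Since $M_{\ell_C} = G_{\Q_{\ell_C}}$ is quasisplit, Lemma \ref{lem:exists-torus} provides, for each $\omega \in [\Omega_C]$, a maximal torus $T_{C,\omega} \subseteq M_{\ell_C}$ with $[\phi_{T_{C,\omega}}] = \omega$. Applying Lemma \ref{lem:torus-info} to the singleton $\fT_{C,\omega} \ceq \{(\ell_C, T_{C,\omega})\}$ then yields a $\W(\Psi_0)$-conjugacy class of lifts $\phi_{C,\omega} \cln \Gal_\Q \to \Aut(\Psi_0)$ of $\phi_G$, together with a maximal torus $T_{C,\omega,\ell} \subseteq M_\ell$ for all but finitely many $\ell \in \cL$, satisfying $[\phi_{T_{C,\omega,\ell}}] = [\phi_{C,\omega}]|_{\Gal_{\Q_\ell}}$ and $\phi_{C,\omega}(\Frob_{\ell_C}) \in \omega$.

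The final step is a Chebotarev argument. Let $F$ be the compositum of the splitting fields of the finitely many $\phi_{C,\omega}$; it is a finite Galois extension of $\Q$ containing $E$ through which each $\phi_{C,\omega}$ factors. For $\ell$ unramified in $F$ and outside a finite set, $M_\ell$ contains the unramified tori $T_{C,\omega,\ell}$, which correspond in $G_{\Q_\ell}$ to the orbits $\phi_{C,\omega}(\Frob_\ell) \in [\Omega_{C_\ell}]$, where $C_\ell$ is the Frobenius class of $\ell$ in $\Gal(E|\Q)$. I would then aim to show that for $\ell$ in a set of Dirichlet density $1$, the family $\{\phi_{C,\omega}(\Frob_\ell)\}_{C,\omega}$ exhausts $[\Omega_{C_\ell}]$; Lemma \ref{lem:super-jordan} then delivers $M_\ell = G_{\Q_\ell}$. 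By construction, exhaustion is automatic when $\Frob_\ell$ is $\Gal(F|\Q)$-conjugate to the Frobenius of some $\ell_{C_\ell}$, since then $\phi_{C_\ell,\omega}(\Frob_\ell) = \omega$ for every $\omega$; Chebotarev immediately provides such $\ell$ in positive Dirichlet density.

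The main obstacle is promoting positive density to density $1$, since a priori the Frobenius of a general $\ell$ may lie in a conjugacy class of $\Gal(F|\Q)$ not represented by any of the $\ell_C$. The natural remedy is a bootstrap: primes just shown to satisfy $M_\ell = G_{\Q_\ell}$, almost all of which are quasisplit, can be recycled as new reference primes in place of the $\ell_C$, producing additional lifts and eventually realizing every Frobenius conjugacy class in a possibly enlarged compositum. Equivalently, one may feed $\fT$'s containing many primes simultaneously into Lemma \ref{lem:torus-info} so that the resulting family of lifts $\phi_\fT$ is rich enough to detect every orbit $\omega' \in [\Omega_{C_\ell}]$ at each $\ell$. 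Organizing this so that the procedure terminates, and ultimately exploiting the rigidity of the Weyl-group and the combinatorics of $\Aut(\Psi_0)$ in the spirit of \cite{larsen-pink}, is where I expect the technical heart of the argument to lie.
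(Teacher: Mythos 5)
Your outline is aligned with the paper's general strategy for Steps 1 and 2—reduction to $G$ adjoint, building tori via Lemma \ref{lem:exists-torus} at the quasisplit reference primes $\ell_C$, and transferring them to other primes via Lemma \ref{lem:torus-info}—but the proposal has a genuine gap at exactly the place you flag yourself: promoting positive density to density $1$. Saying that one "should" be able to bootstrap by recycling newly-discovered good primes, or feed larger $\fT$'s into Lemma \ref{lem:torus-info}, correctly identifies the mechanism but does not explain why the procedure converges. The obstacle is real: enlarging the compositum $F$ of splitting fields makes the Chebotarev cells smaller, so the set of $\ell$ matched by a given round of reference data does not obviously grow. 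Without a quantitative control on what fraction of a fixed class $C \subseteq \Gal(E|\Q)$ is covered at each stage, the bootstrap may simply fail to fill out $C$.

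The paper resolves this by choosing, for each $n$, auxiliary primes $\ell_{i,\omega,\xi}$ ($1\leq i\leq n$) completely split in $E$ with $M_{\ell_{i,\omega,\xi}} = G$ (Step 2 supplies infinitely many), and carefully engineering the collections $\fT_{i,\omega}$ so that the tori satisfy the condition \eqref{eq:torus-info}: at the "diagonal" primes they correspond to $\xi$, and at the off-diagonal ones to the trivial class. This is what forces the splitting fields $F_i$ to be linearly disjoint over $E$—a fact you would need to prove but do not address—and linear disjointness is the key input to the Chebotarev estimate \eqref{eq:density-bound} bounding the density of bad primes in class $C$ by $(1 - 1/(\#\W(\Psi_0)\cdot\#[\Omega]))^n$, which tends to $0$. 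Your proposal never produces the analogue of this estimate. Separately, a smaller issue: your reduction to reductive $M_\ell$ via the assertion that Frobenius images are semisimple and dense does not hold in this generality—only the semisimple part $\rho_\ell(\ff)_\ssimp$ is constrained by compatibility. The paper instead passes to the $G$-semisimplifications $\rho_\ell^\ssimp$ obtained from a minimal parabolic containing $M_\ell$; this step should not be elided.
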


\begin{proof}
As before, we set $M_\ell\ceq\ol{\Img(\rho_\ell)}{}^\Zar$ for each $\ell\in\cL$.
Let us abuse notation by writing ``$M_\ell=G$'' to mean that $M_\ell=G_{\Q_\ell}$.
We will proceed in three steps:
\begin{enumerate}
\item[1.]
First, reduce to the case when $G=G^\mr{ad}$ and each $M_\ell$ is reductive and of maximal rank in $G$. (Then, in particular, the hypotheses of Lemma \ref{lem:torus-info} are satisfied.)
\item[2.]
Next, find infinitely many $\ell\in\cL$ completely split in $E|\Q$ such that $M_\ell=G$.
\item[3.]
Finally, pick an arbitrary conjugacy class $C$ of $\Gal(E|\Q)$ and show that for a set $\mc Z_C$ of Dirichlet density $0$, \tit{all} $\ell\in\cL\setminus\mc Z_C$ which are unramified in $E|\Q$ and whose Frobenius conjugacy class in $\Gal(E|\Q)$ is $C$ satisfy $M_\ell=G$.\phantom{\qedhere}
\end{enumerate}
\end{proof}

\begin{proof}[Step 1]
Consider the projections of the $\rho_\ell$ to $G^\mr{Ab}$ and $G^\mr{ad}$.
The resulting collections of, respectively, $G^\mr{Ab}$- and $G^\mr{ad}$-representations remain compatible, and the hypotheses (\tit a)--(\tit d) of the present theorem hold true for them.
Assume for a moment that the conclusion holds for these two compatible collections.
Then $M_\ell$, for $\ell$ in a density-1 set, surjects onto the codomain of the map $G_{\Q_\ell}\to G_{\Q_\ell}^\mr{ad}\times G_{\Q_\ell}^\mr{Ab}$.
On the other hand, this map has finite kernel, so $M_\ell$ is a finite-index subgroup of the connected group $G_{\Q_\ell}$ for such $\ell$, hence $M_\ell=G$.

Thus it suffices to prove the theorem in the cases when $G$ is a torus or an adjoint group.
The former is trivial because the rank of $M_\ell$ is independent of $\ell$ by \cite[\textsection3]{serre-letter} (see also \cite[Proposition 6.12]{larsen-pink}), so we henceforth assume $G=G^\mr{ad}$.

Now we pass to the $G$-semisimplifications; see \cite[\textsection\textsection3.2 and 4.1]{serre-reductibilite}.
Specifically, for each $\ell\in\cL$, let $P_\ell$ be a parabolic subgroup of $G_{\Q_\ell}$ minimal among those which contain $M_\ell$, and let $L_\ell$ be a Levi subgroup of $P_\ell$.
Finally, let $\rho_\ell^\ssimp$ be the composition $\Gamma\xrightarrow{\rho_\ell}P_\ell(\Q_\ell)\onto L_\ell(\Q_\ell)\into G(\Q_\ell)$.
Then $\rho_\ell^\ssimp(\gamma)_\ssimp=\rho_\ell(\gamma)_\ssimp$ for any $\gamma\in\Gamma$, so the $\rho_\ell^\ssimp$ again form a compatible collection of $G$-representations.
Moreover, $M_\ell=G$ if and only if $\ol{\Img(\rho_\ell^\ssimp)}{}^\Zar=G$, so by replacing each $\rho_\ell$ with $\rho_\ell^\ssimp$, we may and do assume each $M_\ell$ is a reductive subgroup of $G_{\Q_\ell}$.
And $M_\ell$ is of maximal rank in $G_{\Q_\ell}$ again by the $\ell$-independence of the rank.
This completes step 1.\phantom{\qedhere}
\end{proof}

\noindent
A preparatory remark on the rest of the proof: the embeddings $\iota_\ell$ are going to be chosen \tit{in the course of} the following arguments.
Doing so is not necessary for the proof but will allow us to use less notation---what we do use will be already quite a burden---at the cost of making what is actually happening somewhat less scrutable.

\begin{proof}[Step 2]
Fix an embedding $\iota_{\ell_{\{1\}}}$.
By \cite[Lemma 3.6]{larsen-pink} (stated as Lemma \ref{lem:exists-torus} above), for each conjugacy class $\xi$ of $\W(\Psi_0)$, there exists an unramified maximal torus $T_\xi$ of $M_{\ell_{\{1\}}}=G$ such that $[\phi_{T_\xi}](\Frob_{\ell_{\{1\}}})=\xi$.
Let $F_{\fT_\xi}$ be the field attached to $\fT_\xi\ceq\{(\ell_{\{1\}},T_\xi)\}$ as in \eqref{thing:frob-splitting}, and let $F$ be the composite of all $F_{\fT_\xi}$.
Then for all but finitely many $\ell\in\cL$ unramified in $F$ and such that $\ell$ and $\ell_{\{1\}}$ have the same Frobenius conjugacy class in $\Gal(F|\Q)$, if we pick $\iota_\ell$ such that the \tit{elements} defined by $\Frob_\ell$ and $\Frob_{\ell_{\{1\}}}$ are \tit{equal} in $\Gal(F|\Q)$, then $M_\ell$ possesses an unramified maximal torus corresponding to each conjugacy class $\xi$ of $\W(\Psi_0)$, namely $T_{\fT_\xi,\ell}$.
So $M_\ell=G$ for any such $\ell$ by Lemma \ref{lem:super-jordan}, and step 2 is done.\phantom{\qedhere}
\end{proof}

\begin{proof}[Step 3]
Fix a conjugacy class $C$ of $\Gal(E|\Q)$ and an embedding $\iota_{\ell_C}$,  and let $\Omega$ be the preimage of $\phi_G(\Frob_{\ell_C})$ in $\Aut(\Psi_0)$.
Fix $n\geq1$, and for each $i\in\{1,\dots,n\}$, each $\omega\in[\Omega]$, and each conjugacy class $\xi$ of $\W(\Psi_0)$, find a prime $\ell_{i,\omega,\xi}\in\cL$ completely split in $E|\Q$.
Assume all such $\ell_{i,\omega,\xi}$ are distinct from each other and from $\ell_C$ and (by step 2) satisfy $M_{\ell_{i,\omega,\xi}}=G$.
Fix an embedding for each such prime.
For each $i\in\{1,\dots,n\}$ and $\omega\in[\Omega]$, let $\fT_{i,\omega}\ceq\{(\ell_C,T_\omega)\}\cup\{(\ell_{i',\omega',\xi},T_{(i,\omega),(i',\omega',\xi)})\}_{i',\omega',\xi}$, where the tori are unramified and satisfy $[\phi_{T_\omega}](\Frob_{\ell_C})=\omega$ and
\begin{equation}
\label{eq:torus-info}
[\phi_{T_{(i,\omega),(i',\omega',\xi)}}](\Frob_{\ell_{i',\omega',\xi}})
=
\begin{cases}
\xi,&(i',\omega')=(i,\omega)\\
\{1\},&(i',\omega')\neq(i,\omega).
\end{cases}
\end{equation}
Finally, let $F_i$ be the composite of all $F_{\fT_{i,\omega}}$ as $\omega$ varies.
We claim the $F_i$ are linearly disjoint over $E$.
Granted this, we finish step 3 as follows.
For all but finitely many $\ell\in\cL$ unramified in $F_i$ and such that $\ell$ and $\ell_C$ have the same Frobenius conjugacy class in $\Gal(F_i|\Q)$, if we pick $\iota_\ell$ such that $\Frob_\ell$ and $\Frob_{\ell_{C}}$ define the same element of $\Gal(F_i|\Q)$, then $M_\ell=G$ as in step 2 by Lemma \ref{lem:super-jordan}.
Now by the claim and Chebotarev's density theorem, the set of all rational primes $\ell$ which have Frobenius conjugacy class $C$ in $\Gal(E|\Q)$ and are either ramified in some $F_i$ or such that $\ell$ and $\ell_C$ have a \tit{different} Frobenius conjugacy class in $\Gal(F_i|\Q)$ for \tit{each} $i$ is of Dirichlet density
\begin{equation}
\label{eq:density-bound}
\leq
\frac{\#C}{[E:\Q]}
\prod_{i=1}^n\left(1-\frac1{[F_i:E]}\right)
\leq
\frac{\#C}{[E:\Q]}
\left(1-\frac1{\#{\W(\Psi_0)\cdot\#[\Omega]}}\right)^n,
\end{equation}
and taking $n$ to infinity, we see that the set of $\ell$ which have Frobenius conjugacy class $C$ in $\Gal(E|\Q)$ but $M_\ell\neq G$ is of Dirichlet density $0$.
To prove \eqref{eq:density-bound}, we estimate the size of
\[
\Sigma
\ceq
\{
\sigma\in\Gal(F|\Q)
\cln
\sigma|_{F_i}\notin C_i\text{ for each }i\text{, but }\sigma|_E\in C
\},
\]
where $C_i$ is the Frobenius conjugacy class of $\ell_C$ in $\Gal(F_i|\Q)$.
Considering $\Gal(F|\Q)$ as a subgroup of $\prod_{i=1}^n\Gal(F_i|\Q)$ in the natural way, and letting $p_i\cln{\Gal(F_i|\Q)}\to\Gal(E|\Q)$ denote the restriction map, there is (by linear disjointness) a bijection
\[
\Sigma
\cong
\bigsqcup_{\tau\in C}
\left\{(\sigma_1,\dots,\sigma_n)
\in
\prod_{i=1}^n\Gal(F_i|\Q)
\cln
\sigma_i\in p_i^{-1}\{\tau\}\setminus C_i
\right\}.
\]
As $p_i^{-1}\{\tau\}$ intersects nontrivially with $C_i$, we have
$
\#\Sigma
\leq
\#C\prod_{i=1}^n([F_i:E]-1).
$
Combining this with the equality $[F:\Q]=[E:\Q]\prod_{i=1}^n[F_i:E]$ and Chebotarev's density theorem yields the desired upper bound on Dirichlet density.
 
Finally, we prove the claim of the previous paragraph.
In fact, we will show that the $F_{\fT_{i,\omega}}$ are linearly disjoint over $E$.
Fix $(i,\omega)$, and let $F^{(i,\omega)}$ be the composite of all $F_{\fT_{i_1,\omega_1}}$ for $(i_1,\omega_1)\neq(i,\omega)$; we must show that $F_{\fT_{i,\omega}}\cap F^{(i,\omega)}=E$.
Let $\phi_{\fT_{i,\omega}}\cln{\Gal(F_{\fT_{i,\omega}}|\Q)}\to\Aut(\Psi_0)$ be a representative of the conjugacy class $[\phi_{\fT_{i,\omega}}]$, and consider the diagram
\[
\begin{tikzcd}
1\arrow[r]&
\Gal(F_{\fT_{i,\omega}}|E)\arrow[d,hook]\arrow[r]&
\Gal(F_{\fT_{i,\omega}}|\Q)\arrow[d,hook,"\phi_{\fT_{i,\omega}}"']\arrow[r]&
\Gal(E|\Q)\arrow[d,hook,"\phi_G"']\arrow[r]&
1\\
1\arrow[r]&
\W(\Psi_0)\arrow[r]&
\Aut(\Psi_0)\arrow[r]&
\Out(\Psi_0)\arrow[r]&
1,
\end{tikzcd}
\]
whose rows are exact.
Fix any conjugacy class $\xi$ of $\W(\Psi_0)$, and let $\sigma\in\Gal(F_{\fT_{i,\omega}}|\Q)$ be the image of $\Frob_{\ell_{i,\omega,\xi}}$.
By construction, we have $\phi_{\fT_{i,\omega}}(\sigma)\in\xi$, but since $\ell_{i,\omega,\xi}$ splits completely in $E|\Q$, it is also the case that $\sigma\in\Gal(F_{\fT_{i,\omega}}|E)$.
Since $\xi$ may be chosen arbitrarily, the image of $\Gal(F_{\fT_{i,\omega}}|E)$ in $\W(\Psi_0)$ hits every conjugacy class, hence is all of $\W(\Psi_0)$, and in particular the conjugacy classes of $\Gal(F_{\fT_{i,\omega}}|E)$ are in bijection with those of $\W(\Psi_0)$.
On the other hand, $\ell_{i,\omega,\xi}$ splits completely in each $F_{\fT_{i_1,\omega_1}}|\Q$ by \eqref{eq:torus-info}, hence in $F^{(i,\omega)}|\Q$, so $\sigma$ becomes trivial in $\Gal(F_{\fT_{i,\omega}}\cap F^{(i,\omega)}\,|\,E)$, which proves (again since $\xi$ is arbitrary) that $F_{\fT_{i,\omega}}\cap F^{(i,\omega)}=E$.
\end{proof}

\section{Theorem \texorpdfstring{\ref{thm:B}}{B}}
\label{sec:thm-B-pf}

\subsection{Hilbert's irreducibility theorem for profinite groups}
\label{subsec:hilbert-irreducibility}

Recall that, if $\Pi$ is a profinite group, its Frattini subgroup $\Phi(\Pi)$ is defined to be the intersection of all maximal proper \tit{closed} subgroups of $\Pi$.
The following result is a slight refinement of \cite[Fact 3.3.1.1]{cadoret-kret}; its proof is essentially extracted from those of \cite[\textsection9.2, Proposition 2 and \textsection10.6, Theorem]{serre-mordell-weil}.

\begin{prop}
\label{prop:exist-specializations}
Let $X$ be a connected normal scheme of finite type over $\Z$ such that $\dim(X_\Q)\geq1$.
Then there are positive integers $N$ and $d$ with the following property:
For any continuous surjective homomorphism $\rho\cln\pi_1(X)\onto\Pi$ where $\Pi$ is a profinite group with open Frattini subgroup, there exist infinitely many closed points $x$ of $X_\Q$ such that
\begin{enumerate}
\item
$[\kappa_x:\Q]\leq d$.
\item
$\Img(\rho_x)=\Pi$.
\item
$x$ extends to an element of $X(\ol\Z[1/N])$.
\end{enumerate}
\end{prop}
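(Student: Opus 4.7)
The argument combines Noether normalization, Serre's Frattini-subgroup trick, and classical Hilbert irreducibility, essentially along the lines of \cite[\textsection10.6]{serre-mordell-weil}.
The crucial point is that $N$ and $d$ can (and must) be fixed before $\rho$ is introduced.

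First, I would fix $N$ and $d$ as follows: since $X_\Q$ is normal, connected, and of dimension $n\geq1$, Noether normalization supplies a finite surjective $\Q$-morphism $\pi\cln X_\Q\to\A^n_\Q$ of some degree $d$, and spreading out gives a finite surjection $X_{\Z[1/N]}\to\A^n_{\Z[1/N]}$ for some $N\geq1$.
Given $\rho\cln\pi_1(X)\onto\Pi$ with $\Phi(\Pi)$ open, set $Q\ceq\Pi/\Phi(\Pi)$, which is a finite group.
The standard property of the Frattini subgroup says that a closed subgroup $H\leq\Pi$ equals $\Pi$ iff its image in $Q$ is $Q$; thus $\Img(\rho_x)=\Pi$ iff the composite $\Gal_{\kappa_x}\to\Pi\onto Q$ is surjective.
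So we may replace $\rho$ by its composition with $\Pi\onto Q$, reducing to the case where $\Pi=Q$ is finite.

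The surjection $\pi_1(X)\onto Q$ then corresponds to a connected finite étale Galois $Q$-cover $Y\to X$.
Its generic fiber $Y_\Q\to X_\Q$ is still connected (since $\pi_1(X_\Q)\onto\pi_1(X)$, as $X_\Q\into X$ is a dense open immersion into a connected scheme) and étale Galois with group $Q$.
Because $X_\Q$ is normal and connected hence irreducible, and étale maps preserve normality, $Y_\Q$ is normal and connected, hence integral.
So $Y_\Q\to\A^n_\Q$ is a finite surjective morphism of degree $d|Q|$ from an integral normal variety, and classical Hilbert irreducibility (\cite[\textsection9.2, Proposition 2]{serre-mordell-weil}) supplies infinitely many $t\in\Z^n$ for which $Y_{\Q,t}$ is the spectrum of a single field $L$ of degree $d|Q|$ over $\Q$.
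For such $t$, the surjection $Y_{\Q,t}\onto X_{\Q,t}$ of connected schemes forces $X_{\Q,t}=\Spec(\kappa_x)$ with $[\kappa_x\cln\Q]=d$; and then $Y_x\to\Spec(\kappa_x)$, being an étale $Q$-cover with total space $\Spec(L)$, is of the form $\Spec(L)\to\Spec(\kappa_x)$ with $L/\kappa_x$ Galois of group $Q$.
Hence $\Gal_{\kappa_x}\onto Q$, and by the Frattini reduction $\Img(\rho_x)=\Pi$.

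Finally, for property (\textit{c}): since $t\in\A^n(\Z[1/N])$ and $X_{\Z[1/N]}\to\A^n_{\Z[1/N]}$ is finite, the pullback of $X_{\Z[1/N]}$ along the section $t$ is finite over $\Spec(\Z[1/N])$, and the valuative criterion for finite morphisms extends $x$ to a point of $X(\ol\Z[1/N])$.
The whole proof is really just the right packaging of classical tools, so there is no single hard step; the main conceptual observation is the Frattini reduction, after which the uniformity of $N$ and $d$ in $\rho$ is automatic because Noether normalization has already been performed. The most delicate verification is that $Y_\Q$ is integral (not just connected), which relies crucially on the normality of $X$.
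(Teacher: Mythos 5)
Your proof is correct and follows essentially the same route as the paper's: fix $(N,d)$ by choosing a finite map $X_\Q\to\A^n_\Q$ and spreading out, reduce to the finite quotient $Q=\Pi/\Phi(\Pi)$ via the Frattini argument, and invoke Hilbert irreducibility together with Serre's result that $\A^n(\Z[1/N])$ meets the complement of any thin set infinitely often. The only variation is that the paper applies Hilbert irreducibility to each quotient $Y/\Sigma$ for $\Sigma$ a maximal proper subgroup of $Q$ (concluding $x$ does not lift to any $Y/\Sigma$, so $\Img(\overline\rho_x)$ lies in no $\Sigma$), whereas you apply it once to $Y$ itself and deduce directly that $Y_x$ is connected; both are fine. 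One small inaccuracy in your justification: $X_\Q\to X$ is the generic fiber, a pro-open immersion but not an open immersion; the fact you actually need, surjectivity of $\pi_1(X_\Q)\to\pi_1(X)$, is nonetheless true (see \cite[Lemma \href{https://stacks.math.columbia.edu/tag/0BQI}{\texttt{0BQI}}]{stacks}).
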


\begin{proof}
By the surjectivity of $\pi_1(U)\to\pi_1(X)$ for open $U\subseteq X$ \cite[Lemma \href{https://stacks.math.columbia.edu/tag/0BQI}{\texttt{0BQI}}]{stacks}, we may assume $X$ is affine.
Pick a finite map $\pi\cln X_\Q\to\A^n_\Q$ of degree $d$, and let $N$ be a positive integer such that $\pi$ spreads out to a finite morphism $X_{\Z[1/N]}\to\smash{\A^n_{\Z[1/N]}}$.
We claim that these $N$ and $d$ work.

Fix $\rho\cln\pi_1(X)\onto\Pi$ as in the statement, and let $\overline\rho$ be the composition
\[
\pi_1(X_\Q)
\onto
\pi_1(X)\onto
\Pi\onto\Pi/\Phi(\Pi),
\]
the first arrow being surjective by \tit{loc.\ cit.}
If $x\in X_\Q$ satisfies $\Img(\overline\rho_x)=\Pi/\Phi(\Pi)$, then $\Img(\rho_x)\cdot\Phi(\Pi)=\Pi$, hence $\Img(\rho_x)=\Pi$ by the definition of $\Phi(\Pi)$.
In the following paragraph, we produce such $x$.

Since, by hypothesis, $\Pi/\Phi(\Pi)$ is finite, the map $\overline\rho$ corresponds to a finite \'etale cover $Y\onto X_\Q$ which is Galois with group $\Pi/\Phi(\Pi)$.
For each maximal proper subgroup $\Sigma$ of $\Pi/\Phi(\Pi)$, let $\pi_\Sigma$ denote the composition $Y/\Sigma\onto X_\Q\xrightarrow{\pi}\A^n_\Q$. Since $X$ is normal, $Y/\Sigma$ is irreducible \cite[Lemma \href{https://stacks.math.columbia.edu/tag/0BQL}{\texttt{0BQL}}]{stacks}, so by \cite[\textsection9.2, Proposition 1]{serre-mordell-weil}, there is a thin set $\Omega_\Sigma\subseteq\A^n(\Q)$ such that if $x_0\in\A^n(\Q)\setminus\Omega_\Sigma$, then the $\ol\Q$-points of the fiber $\pi_\Sigma^{-1}\{x_0\}$ consist of $\deg(\pi_\Sigma)$ points of $Y/\Sigma$ all conjugate over $\Q$.
Put $\Omega\ceq\bigcup_\Sigma\Omega_\Sigma$.
By \cite[\textsection9.6, Theorem]{serre-mordell-weil}, $\A^n(\Z[1/N])\setminus\Omega$ is an infinite set; let $x_0$ be an element and $x\in\pi^{-1}\{x_0\}$.
Then $x$ is a closed point of $X_\Q$ satisfying $[\kappa_x:\Q]\leq d$.
Since $x_0\notin\Omega$ and $\deg(\pi_\Sigma)>d$, the point $x$ does not lift to $(Y/\Sigma)(\kappa_x)$ for any $\Sigma$, so $\Img(\overline\rho_x)$ cannot be a subset of $\Sigma$, i.e.\ $\Img(\overline\rho_x)=\Pi/\Phi(\Pi)$.

Finally, since $X_{\Z[1/N]}\to\A^n_{\Z[1/N]}$ is finite and $\ol\Z[1/N]$ is the integral closure of $\Z[1/N]$ in $\ol\Q$, the point $x$ extends to an element of $X(\ol\Z[1/N])$, as desired.
\end{proof}

\noindent
Next, we describe many well-known examples of profinite groups having open Frattini subgroup (cf.\ \cite[\textsection10.6, Example 1]{serre-mordell-weil}).

\begin{lem}
\label{lem:open-frattini}
Let $\ell_1,\dots,\ell_n$ be primes, $\Pi$ a compact subgroup of $\GL_{d_1}(\Q_{\ell_1})\times\cdots\times\GL_{d_n}(\Q_{\ell_n})$.
Then the Frattini subgroup $\Phi(\Pi)$ is open in $\Pi$.
\end{lem}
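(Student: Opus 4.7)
The plan is to locate an open normal subgroup $K$ of $\Pi$ whose Frattini subgroup is open, then transport this openness upward via the profinite analogue of Gasch\"utz's theorem: $\Phi(N) \subseteq \Phi(G)$ for any closed normal subgroup $N$ of a profinite group $G$. This analogue follows from the topological nongenerator characterization of $\Phi$: if $M$ is a maximal proper closed subgroup of $G$ not containing $\Phi(N)$, then $\Phi(N)M = G$ by maximality, and intersecting with $N$ and invoking the modular law yields $\Phi(N)(N \cap M) = N$, whence $N \subseteq M$ by the nongenerator property of $\Phi(N)$---contradiction.

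To locate $K$, I would first note that each projection $\pi_i(\Pi)$ is a compact subgroup of $\GL_{d_i}(\Q_{\ell_i})$ and hence is contained in a conjugate of the maximal compact $\GL_{d_i}(\Z_{\ell_i})$. After simultaneously conjugating each factor, one may assume $\Pi \subseteq \prod_i \GL_{d_i}(\Z_{\ell_i})$, and take $K \ceq \Pi \cap \prod_i K_i$, where $K_i$ is the first congruence subgroup of $\GL_{d_i}(\Z_{\ell_i})$. Then $K$ is open and normal in $\Pi$, and embeds into the product of pairwise coprime pro-$\ell_i$ groups $\prod_i K_i$.

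The key structural input is that any closed subgroup of a product of profinite groups of pairwise coprime pro-order is the product of its projections---the finite version reducing to the fact that a finite group with Sylow components of pairwise coprime prime-power orders is nilpotent and splits as the direct product of those Sylow subgroups. Applying this to $K$ gives $K = \prod_i K^{(i)}$ with each $K^{(i)}$ a closed subgroup of the compact $\ell_i$-adic Lie group $\GL_{d_i}(\Z_{\ell_i})$. Such closed subgroups are themselves compact $\ell_i$-adic Lie groups and in particular topologically finitely generated, so $K^{(i)}/\Phi(K^{(i)})$ is a finite $\F_{\ell_i}$-vector space and $\Phi(K^{(i)})$ is open. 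One then verifies $\Phi(K) = \prod_i \Phi(K^{(i)})$: the inclusion $\supseteq$ comes from Gasch\"utz applied to each direct factor $K^{(i)} \trianglelefteq K$, and $\subseteq$ because $K/\prod_i \Phi(K^{(i)})$ is a finite product of elementary abelian groups of pairwise coprime orders, which has trivial Frattini. Thus $\Phi(K)$ is open in $K$, hence in $\Pi$, and a final application of Gasch\"utz gives $\Phi(K) \subseteq \Phi(\Pi)$, completing the proof.

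There is no serious obstacle---every ingredient (conjugacy of maximal compact subgroups of $\GL_d(\Q_\ell)$, topological finite generation of compact $p$-adic Lie groups, and Gasch\"utz's theorem in the profinite setting) is standard. The one structural observation that makes everything click is that $K$ is pro-nilpotent precisely because it lies inside a product of pro-groups of pairwise coprime orders, which is what enables the decomposition $K = \prod_i K^{(i)}$ driving the argument.
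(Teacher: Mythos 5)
Your proof is correct and follows essentially the same route as the paper: reduce to $\Pi\subseteq\prod_i\GL_{d_i}(\Z_{\ell_i})$ with the $\ell_i$ distinct, pass to an open (normal) subgroup that decomposes as a product of topologically finitely generated pro-$\ell_i$ groups, and conclude openness of the Frattini subgroup. The only difference is that where the paper cites criterion (iv) of Serre's proposition in \emph{Lectures on the Mordell--Weil Theorem}, \S10.6, you reprove it from scratch via the profinite Gasch\"utz lemma $\Phi(N)\subseteq\Phi(G)$ and the product formula $\Phi(\prod_i K^{(i)})=\prod_i\Phi(K^{(i)})$---a correct and self-contained unpacking of the same idea.
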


\begin{proof}
We assume the $\ell_i$ are distinct and $\Pi\subseteq\prod_i\GL_{d_i}(\Z_{\ell_i})$.
Then $\Pi$ has an open subgroup of the form $\Pi_1\times\cdots\times\Pi_n$, such that for each $i$, the group $\Pi_i$ is pro-$\ell_i$.
Furthermore, each $\Pi_i$ is topologically finitely generated (see e.g.\ \cite[Theorem 5.2]{analytic-pro-p-groups}).
This verifies criterion (iv) of \cite[\textsection10.6, Proposition]{serre-mordell-weil}.
\end{proof}

\subsection{Statement and proof}
\label{subsec:thm-B}

\begin{thing}
Let $X$ be a connected normal scheme of finite type over $\Z$.
By Chebotarev's density theorem \cite[\textsection2.7, Theorem 7]{serre-chebotarev}, the set of Frobenius elements is dense in $\pi_1(X)$.
Together with the surjectivity of $\pi_1(X_\Q)\to\pi_1(X)$ \cite[Lemma \href{https://stacks.math.columbia.edu/tag/0BQI}{\texttt{0BQI}}]{stacks}, this implies that $(\pi_1(X_\Q),\cF_X)$ is an F-group in the sense of Definition \ref{defn:compatible}, where $\cF_X$ is the preimage of the set of Frobenius elements under the map $\pi_1(X_\Q)\to\pi_1(X)$.

We will say that a collection $\{\rho_\ell\cln\pi_1(X_{\Z[1/\ell]})\to G(\Q_\ell)\}_{\ell\in\cL}$ of $G$-representations, where $\cL$ is a set of primes, is compatible if the induced collection $\{\pi_1(X_\Q)\to G(\Q_\ell)\}_{\ell\in\cL}$ is compatible relative to $\cF_X$ in the sense of Definition \ref{defn:compatible}.
(This agrees with the notion of compatibility described in \eqref{thing:tate-stuff}.)
\end{thing}

\begin{thm}
\label{thm:main-2}
Let $X$ be a connected normal scheme of finite type over $\Z$ such that $\dim(X_\Q)\geq1$, and let $\{\rho_\ell\cln\pi_1(X_{\Z[1/\ell]})\to G(\Q_\ell)\}_{\ell\in\cL}$ be a compatible collection of $G$-representations, where $G$ is a connected reductive group over $\Q$ and $\cL$ is a set of Dirichlet density $1$.
Assume that $\ol{\Img(\rho_\ell)}{}^\Zar=G_{\Q_\ell}$ for each $\ell\in\cL$.
Let $E|\Q$ denote the splitting extension of $\phi_{G^\mr{ad}}$ as in \eqref{thing:main-thm-G}.

Let $x$ be a closed point of $X_\Q$.
Assume there exists a positive integer $N$ such that $x$ extends to an element of $X(\ol\Z[1/N])$ and a finite set $\cR\subseteq\cL$ with the following property:
For each conjugacy class $C$ of $\Gal(E|\Q)$ there exists $\ell\in\cR$ such that
\begin{enumerate}
\item
$\ell$ is unramified in $E|\Q$.
\item
the Frobenius conjugacy class of $\ell$ in $\Gal(E|\Q)$ is $C$.
\item
$G_{\Q_\ell}$ is quasisplit.
\item
$\ell\nmid N$.
\item
$\ol{\Img(\rho_{\ell,x})}{}^\Zar=G_{\Q_\ell}$.
\end{enumerate}
Then the set
$
\{
\ell\in\cL
\cln
\ol{\Img(\rho_{\ell,x})}{}^\Zar
=
G_{\Q_\ell}
\}
$
has Dirichlet density $1$.

Moreover, there exists a positive integer $d$ and infinitely many $x$ as in the previous paragraph satisfying $[\kappa_x:\Q]\leq d$.
\end{thm}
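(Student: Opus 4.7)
The first statement of Theorem \ref{thm:main-2} should reduce directly to Theorem \ref{thm:main}. The plan is to verify that $\{\rho_{\ell,x}\cln\Gal_{\kappa_x}\to G(\Q_\ell)\}_{\ell\in\cL}$ is a compatible collection of $G$-representations of the F-group $(\Gal_{\kappa_x},\cF_x)$, where $\cF_x$ consists of Frobenius elements attached to primes $\lambda$ of $\kappa_x$ outside a finite set containing those dividing $N$. Density of $\cF_x$ in $\Gal_{\kappa_x}$ follows from Chebotarev, and compatibility is inherited from the original collection: for such $\lambda$ and $\ell\neq\charac(\kappa_\lambda)$, $\rho_{\ell,x}(\Frob_\lambda)=\rho_\ell(\Frob_{x,\lambda})$, where $x,\lambda$ denotes the closed point of $X_{\Z[1/\ell]}$ obtained by specializing $\ol{\{x\}}$ at $\lambda$; this carries the $\Q$-rational semisimple conjugacy class from the ambient compatibility. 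The hypothesis supplies a $G$-good $\cR$ with Zariski-density at each $\ell\in\cR$, so Theorem \ref{thm:main} applies and delivers the conclusion.

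For the moreover part, the plan is to apply Proposition \ref{prop:exist-specializations} to a combined representation. First I would fix a finite map $\pi\cln X_\Q\to\A^n_\Q$ of degree $d$ and a positive integer $N_X$ over which $\pi$ spreads out to $X_{\Z[1/N_X]}\to\A^n_{\Z[1/N_X]}$. Using that $\cL$ has Dirichlet density $1$ and the constraints of $G$-goodness (unramified in $E$, prescribed Frobenius class, quasisplit) each exclude density-zero subsets, I would pick a $G$-good set $\cR_0\subseteq\cL$ consisting entirely of primes \emph{not dividing $N_X$}, and set $M\ceq\prod_{\ell\in\cR_0}\ell$. Then form the combined map $\rho\ceq\prod_{\ell\in\cR_0}\rho_\ell\cln\pi_1(X_{\Z[1/M]})\to\prod_{\ell\in\cR_0}G(\Q_\ell)$; its image $\Pi$ is a compact subgroup of some $\prod_\ell\GL_{d_\ell}(\Q_\ell)$ after fixing faithful representations, and by Lemma \ref{lem:open-frattini}, $\Phi(\Pi)$ is open in $\Pi$, so $\Pi/\Phi(\Pi)$ is finite.

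I would then adapt the proof of Proposition \ref{prop:exist-specializations} to the composite $\overline\rho\cln\pi_1(X_\Q)\onto\pi_1(X_{\Z[1/M]})\to\Pi\onto\Pi/\Phi(\Pi)$: this is a continuous surjection onto a finite group, corresponding to a finite \'etale Galois cover $Y\to X_\Q$ to which the thin-set argument of the proposition applies unchanged. Crucially, I would specialize at $x_0\in\A^n(\Z[1/N_X])\setminus\Omega$ rather than at $\Z[1/MN_X]$-integral points; this set is infinite by \cite[\textsection9.6, Theorem]{serre-mordell-weil}, and since $\pi$ already spreads out over $X_{\Z[1/N_X]}$ and $\ol\Z[1/N_X]$ is integrally closed, any preimage $x\in\pi^{-1}\{x_0\}$ extends to $X(\ol\Z[1/N_X])$, satisfies $[\kappa_x\cln\Q]\leq d$, and has $\Img(\overline\rho_x)=\Pi/\Phi(\Pi)$, from which $\Img(\rho_x)=\Pi$ follows via the Frattini property applied to the closed subgroup $\Img(\rho_x)\leq\Pi$. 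Projecting $\Img(\rho_x)=\Pi$ onto each factor then yields $\Img(\rho_{\ell,x})=\Img(\rho_\ell)$, Zariski-dense in $G_{\Q_\ell}$ for each $\ell\in\cR_0$; taking $N\ceq N_X$ and $\cR\ceq\cR_0$ verifies the hypotheses of the first paragraph.

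The main obstacle will be the careful bookkeeping of integrality levels. Applying Proposition \ref{prop:exist-specializations} na\"ively to $X_{\Z[1/M]}$---the natural domain of $\rho$---would force $M$ to divide the theorem's $N$, contradicting condition (d) when $\cR=\cR_0$. The remedy above, working at the level of $\pi_1(X_\Q)$ and specializing to $\Z[1/N_X]$-integral points of $\A^n$, reconciles the two constraints; the verification that the argument of Proposition \ref{prop:exist-specializations} tolerates this change---it depends only on the \'etale cover of $X_\Q$ induced by the Frattini quotient and on Serre's integral-points theorem, both of which are insensitive to the precise domain of the original representation---is the central technical point.
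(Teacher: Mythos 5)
Your proposal is correct and follows essentially the same path as the paper: reduce the density statement to Theorem~\ref{thm:main} by checking that $\{\rho_{\ell,x}\}$ is compatible relative to the Frobenius set of $\Spec(\cO_{\kappa_x}[1/N])$, then produce the points $x$ by feeding the product map $\prod_{\ell\in\cR}\rho_\ell$ into Proposition~\ref{prop:exist-specializations} via Lemma~\ref{lem:open-frattini}. The one place you go beyond the paper's (terse) write-up is in spelling out the integrality bookkeeping—that $N$ must be fixed from $X$ alone before $\cR$ is chosen, and that the Frattini argument in Proposition~\ref{prop:exist-specializations} really only needs the representation on $\pi_1(X_\Q)$ rather than $\pi_1(X)$—which is exactly the right way to reconcile the statement of that proposition with the fact that $\rho_\cR$ does not factor through $\pi_1(X)$.
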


\begin{proof}
If $(x,N,\cR)$ is as in the statement of the present theorem, the collection $\{\rho_{\ell,x}\cln{\Gal_{\kappa_x}}\to G(\Q_\ell)\}_{\ell\in\cL}$ is compatible relative to $\cF_{\Spec(\cO_{\kappa_x}[1/N])}$, hence satisfies the hypotheses of Theorem \ref{thm:main}, which gives the statement about Dirichlet density.

We now explain how to find such $(x,N,\cR)$.
Let $N$ be as Proposition \ref{prop:exist-specializations}, and let $\cR\subseteq\cL$ be any set such that for each $C$ as above there exists $\ell\in\cR$ satisfying (\tit a)--(\tit d).
Considering the product map
\[
\rho_\cR
\cln
\pi_1(X_\Q)
\to
\prod_{\ell\in\cR}G(\Q_\ell),
\]
use Proposition \ref{prop:exist-specializations} and Lemma \ref{lem:open-frattini} to find a closed point $x$ of $X_\Q$ such that $\Img(\rho_{\cR,x})=\Img(\rho_\cR)$ and $x$ extends to an element of $X(\ol\Z[1/N])$.
Then $(x,N,\cR)$ has all the desired properties.
\end{proof}

\bibliographystyle{amsalpha}
\bibliography{tozd.bib}

\end{document}